\documentclass[a4paper, 11pt] {article}
\usepackage{amsfonts, amsmath, amssymb, amsthm,url}
\usepackage{graphicx}
\usepackage[english] {babel}
\usepackage{enumitem}

\usepackage[ps, all,cmtip, dvips]{xy}

\usepackage{geometry}
\geometry{left=3.5cm,top=3.75cm,right=3.5cm,bottom=3.75cm,headheight=14pt}

\newtheorem{thm}{Theorem}[section]
\newtheorem{cor}[thm]{Corollary}
\newtheorem{lem}[thm]{Lemma}
\newtheorem{defi}[thm]{Definition}
\newtheorem{prop}[thm]{Proposition}

\theoremstyle{definition}
\newtheorem{rem}[thm]{Remark}
\newtheorem{ex}[thm]{Example}

\newcommand{\Cob}{\mathcal{C}}
\newcommand{\R}{\mathbb{R}}

\newcommand{\Z}{\mathbb{Z}}

\newcommand{\eps}{\varepsilon}
\DeclareMathOperator{\Th}{Th}

\DeclareMathOperator{\indre}{int}

\DeclareMathOperator{\Ob}{Ob}
\DeclareMathOperator{\Mor}{Mor}

\hyphenation{ca-te-go-ries}
\hyphenation{ma-the-ma-tics}
\hyphenation{ma-the-ma-ti-cal}
\hyphenation{se-pa-rate-ly}
\hyphenation{cha-rac-te-ris-tic}
\hyphenation{ge-ne-ra-tors}
\hyphenation{mo-dule}
\hyphenation{par-ti-cu-lar}
\hyphenation{ge-ne-ra-li-zes}
\hyphenation{ma-ni-fold}
\hyphenation{ge-ne-ra-tor}
\hyphenation{pro-per-ties}
\hyphenation{pe-rio-di-city}
\hyphenation{si-tua-tion}
\hyphenation{dif-fe-rent-ly}
\hyphenation{e-qui-va-lence}
\hyphenation{de-ge-ne-ra-cy}
\title{A geometric interpretation of the homotopy groups of the cobordism category}
\author{Marcel B\"{o}kstedt and Anne Marie Svane}
\date{}
\begin{document} 
\maketitle
\begin{abstract}
The classifying space of the embedded cobordism category has been identified  in \cite{GMTW} as the infinite loop space of a certain Thom spectrum. This identifies the set of path components with the classical cobordism group. In this paper, we give a geo\-me\-tric interpretation of the higher homotopy groups as certain cobordism groups where all manifolds are now equipped with a set of orthonormal sections in the tangent bundle. We also give a description of the fundamental group as a free group with a set of geometrically intuitive relations.
\end{abstract}
\section{Introduction}
Consider the embedded cobordism category $\mathcal{C}_d $ introduced in \cite{GMTW}. The objects are smooth closed $(d-1)$-manifolds embedded in $(0,1)^{n+d-1}$. 
A morphism from $M_0$ to $M_1$ is a smooth compact $d$-dimensional manifold $W$ embedded in $ (0,1)^{n +d-1}\times [0,1]$ which is cylindrical near the boundary $\partial W$ where
\begin{equation*}
\partial W = W\cap ((0,1)^{n +d-1}\times \{0,1\})=  M_0 \times \{0\}\cup  M_1\times \{1\}. 
\end{equation*}
There is also a version of $\mathcal{C}_d $ where all manifolds have orientations. 


The main result about the cobordism category is the identification of its classifying space $B\mathcal{C}_d$ proved by Galatius, Tillmann, Madsen, and Weiss in \cite{GMTW}.
In Section~\ref{embcob}, a more precise definition of the cobordism category and statement of their theorem is given.

It is immediate from this theorem that $\pi_0(B\mathcal{C}_d)$ is the usual Thom cobordism group $\Omega_{d-1}$ of $(d-1)$-dimensional manifolds. 
The goal of this paper is to give a geometric interpretation of the higher homotopy groups $\pi_r(B\mathcal{C}_d)$, $r>0$.
 
Let $M_0$ and $M_1$ be closed $(d-1)$-manifolds with $r$ pointwise linearly independent sections given in $TM_0\oplus \R$ and $TM_1 \oplus \R$, respectively. A vector field cobordism from $M_0$ to $M_1$ is a cobordism with $r$ independent sections in $TW$ extending the ones given on the boundary where $TW_{\mid M_i}$ is identified with $TM_i\oplus \R$ using the inward normal for  $i=0$ and the outward normal for $i=1$. The purpose of Section \ref{reppi1} and \ref{interprinv} below is to show:
\begin{thm} \label{vfcob.}
If $d$ is odd or $r<\frac{d}{2}$, vector field cobordism is an equivalence relation and
$\pi_{r}(B\mathcal{C}_{d-r})$ is isomorphic to the group of equivalence classes. Otherwise, the latter is true for the equivalence relation generated by vector field cobordism. 

For $d$ even or $r<\frac{d}{2}$, every equivalence class in $\pi_r(B\mathcal{C}_{d-r})$ is represented by a closed $(d-1)$-manifold $M$ with $r-1$ independent sections in $TM$ together with the normal section. 

This holds in both the oriented and unoriented situation.
\end{thm}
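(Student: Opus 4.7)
My plan is to combine the GMTW equivalence $B\mathcal{C}_{d-r}\simeq\Omega^{\infty-1}\MT(d-r)$, which yields $\pi_r(B\mathcal{C}_{d-r})\cong\pi_{r-1}(\MT(d-r))$, with a Pontryagin--Thom translation to the geometric side. At the $N$-th space of the Madsen--Tillmann spectrum, $\pi_{r-1+N}\Th(\gamma^\perp_{d-r,N})$ is identified with the bordism group of triples $(P^{r-1},V,\sigma)$ consisting of a closed $(r-1)$-manifold $P$, a $(d-r)$-plane bundle $V\to P$, and a trivialization $\sigma\colon V\oplus\nu_P\cong\underline{\R^{d-r+N}}$ coming from an embedding $P\subset\R^{r-1+N}$. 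The bulk of the proof sets up a bijection between this bordism group and vector field cobordism classes of $(d-1)$-manifolds with $r$ independent sections in $TM\oplus\R$.

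In one direction, given $(M^{d-1},s_1,\ldots,s_r)$ the orthogonal complement of the pointwise $r$-frame is a $(d-r)$-plane subbundle $V\subset TM\oplus\R$, and together with a Euclidean embedding of $M$ the sections furnish the stable trivialization demanded by Pontryagin--Thom, producing a class in $\pi_{r-1}(\MT(d-r))$. A vector field cobordism $W$ with extending sections in $TW$ realizes a bordism of the resulting PT data, so the assignment descends to vector field cobordism classes. For the inverse direction, from PT data $(P^{r-1},V)$ I would build the closed $(d-1)$-manifold $M=S(V\oplus\underline{\R})$, the unit sphere bundle of $V\oplus\underline{\R}$; its stable tangent bundle satisfies $TM\oplus\R\cong\pi^*(TP\oplus V\oplus\R)$, and the fibrewise outward radial direction supplies the canonical normal section. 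The remaining $r-1$ tangential sections of $TM$ come from the stable framing $V\oplus\underline{\R^{r-1+N}}\cong TP\oplus\underline{\R^{d-r+N}}$ implicit in the PT data, destabilised over $M$ by obstruction theory; this step is exactly where the hypothesis $d$ even or $r<d/2$ enters, giving the special representative asserted in the second clause of the statement.

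The other delicate point is symmetry of vector field cobordism. Reversing $W$ swaps inward and outward boundary normals and so flips the sign of one coordinate of the $r$-frame at $\partial W$; to extend a compatible reversed frame across $W$ one must homotope a specific reflection in the Stiefel manifold $V_r(\R^d)$ to the identity, and the resulting obstructions live in the cohomology of $W$ with coefficients in $\pi_*V_r(\R^d)$. A connectivity and parity analysis identifies $d$ odd or $r<d/2$ as the range in which these all vanish; outside it a single cobordism need not be reversible and one passes to the equivalence relation it generates. The oriented and unoriented variants should run in parallel, differing only by whether the obstructing coefficient systems are taken in $\Z$ or $\Z/2$ and leaving the stated dimension ranges unchanged. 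The main technical hurdle is precisely this obstruction-theoretic bookkeeping, which pins down the two slightly different dimensional windows (for the equivalence relation in the first clause and for the special representative in the second) and drives the overall architecture of the proof.
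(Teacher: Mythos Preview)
Your Pontryagin--Thom bookkeeping is off by a shift of $d-r$. In the paper's indexing the relevant group is $\pi_{d-1}(MT(d-r))$ (equivalently $\pi_{r-1}$ of the shifted spectrum, but then the $N$th space carries a bundle of rank $N-(d-r)$, not $N$), and transversality produces closed $(d-1)$-manifolds, not $(r-1)$-manifolds. Indeed the correct Pontryagin--Thom data, once one passes to the equivalent spectrum $i_r^*MT(d)$ over $V_r(U_d)$, is exactly a closed $(d-1)$-manifold $M$ together with an $r$-frame in $TM\oplus\R$; this is already the vector field cobordism datum, so no sphere-bundle construction is needed for an inverse. Your proposed $M=S(V\oplus\underline{\R})$ would only ever produce sphere bundles over $(r-1)$-manifolds, a very restricted class, and you give no argument that every vector field cobordism class contains such a representative.

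The paper's route is in any case quite different from a direct Pontryagin--Thom translation. It introduces the auxiliary category $\Cob_d^r$ of embedded cobordisms equipped with $r$ orthonormal tangent vector fields, proves a weak equivalence $B\Cob_{d+k}^{r+k}\simeq\Omega^kB\Cob_d^r$, and hence $\pi_r(B\Cob_{d-r})\cong\pi_0(B\Cob_d^r)$. Path components of $B\Cob_d^r$ are then identified with vector field cobordism classes by a general argument that any path in the classifying space deforms to a zigzag of morphism paths. For the second clause one uses instead $\pi_r(B\Cob_{d-r})\cong\pi_1(B\Cob_{d-1}^{r-1})$ and shows that every loop is homotopic to a single morphism path; applying the reversal theorem in dimension $d-1$ rather than $d$ is precisely why the parity condition flips from ``$d$ odd'' in the first clause to ``$d$ even'' in the second, a point your sketch leaves unexplained.

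The substantive step---symmetry of the relation---is not the connectivity count you describe. Reversing a cobordism and re-reflecting the boundary frame leaves an obstruction on each top cell governed by the degree of an auxiliary extending unit vector field, and the paper removes it by taking connected sums in the interior of the cell: with a torus $T^d$ when the local degree is $+1$ (or $\pm 1$ for $d$ odd), and with a product of spheres $S^i\times S^j$ when $d$ is even and the degree is $-1$. The latter case requires $r<\tfrac{d}{2}$ so that each sphere factor admits $r$ independent vector fields with a single singularity, and it hinges on a computation of the intrinsic join pairing $\pi_{i-1}(V_{i,r})\times\pi_{j-1}(V_{j,r})\to\pi_{d-1}(V_{d,r})$ together with the action of the reflection map $\rho_d$. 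This explicit surgery, not an abstract vanishing of obstruction groups, is what pins down the hypothesis $d$ odd or $r<\tfrac{d}{2}$, and it is absent from your outline.
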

In an upcoming paper \cite{ABJ}, we are going to study obstructions to independent tangent vector fields on manifolds and Theorem \ref{vfcob.} will play a role in the identification of the top obstruction. 

In the last two sections of the paper, we obtain a description of $\pi_1(B\mathcal{C}_d)$ in terms of generators and relations:
\begin{thm}
$\pi_1(B\mathcal{C}_d)$ is an abelian group generated by the diffeomorphism classes $[W]$ of closed $d$-manifolds $W$. The only relations are as follows: If $W_1$ and $W_2$ are cobordisms from $\emptyset$ to $M$ and $W_3$ and $W_4$ are cobordisms from $M$ to $\emptyset$, then
\begin{equation*}
[W_1 \cup_M W_3] + [W_2 \cup_M W_4] = [W_1  \cup_M W_4] + [W_2 \cup_M W_3].
\end{equation*} 
Under the isomorphism of Theorem \ref{vfcob.}, $[W]$ corresponds to the equivalence class of $W$ with the single section $\eps$.
\end{thm}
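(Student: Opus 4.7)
The plan is to combine Theorem~\ref{vfcob.} (applied with $r=1$) with a simplicial analysis of the nerve of $\mathcal{C}_d$. By Theorem~\ref{vfcob.}, $\pi_1(B\mathcal{C}_d)$ is identified with the abelian group of vector-field-cobordism classes of closed $d$-manifolds $M$ equipped with a single section in $TM\oplus\R$, and by the second part of the theorem every class is represented by a pair $(M,\eps)$ in which the only section is the normal one. A closed $d$-manifold $W$, viewed as a morphism $\emptyset\to\emptyset$ in $\mathcal{C}_d$, gives a $1$-simplex in $N_\bullet\mathcal{C}_d$ whose geometric realisation is a based loop in $B\mathcal{C}_d$. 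The first task is to verify that this loop is sent by the isomorphism of Theorem~\ref{vfcob.} to the class of $(W,\eps)$. That produces a well-defined homomorphism $\phi:F\to\pi_1(B\mathcal{C}_d)$ from the free abelian group $F$ on diffeomorphism classes of closed $d$-manifolds, surjective by the second part of Theorem~\ref{vfcob.}. Commutativity of $\pi_1(B\mathcal{C}_d)$ is automatic because $B\mathcal{C}_d$ is an infinite loop space by Galatius--Madsen--Tillmann--Weiss.

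Next I would verify that the stated relation holds. The cleanest route is via the fundamental groupoid $\Pi_1(B\mathcal{C}_d)$: a composable pair of morphisms $W_i:\emptyset\to M$ and $W_j:M\to\emptyset$ forms a $2$-simplex in $N_\bullet\mathcal{C}_d$, giving the identity $[W_i\cup_M W_j]=[W_i]\cdot[W_j]$ in $\Pi_1$. Thus the element $[W_1\cup_M W_3]+[W_2\cup_M W_4]-[W_1\cup_M W_4]-[W_2\cup_M W_3]$ of $\pi_1(B\mathcal{C}_d)$ is represented by an explicit word in the $[W_i]$ and their inverses based at $\emptyset$; using that both $\pi_1(B\mathcal{C}_d,\emptyset)$ and $\pi_1(B\mathcal{C}_d,M)$ are abelian, this word collapses to a product of three loops at $M$ whose sum telescopes to the identity. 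A parallel geometric argument constructs a direct vector-field cobordism by gluing four slabs $W_i\times[0,1]$ along the strips $M\times[0,1]$ in a two-dimensional saddle pattern that transforms the pairing $(1,3),(2,4)$ of the $M$'s at time $0$ into $(1,4),(2,3)$ at time $1$, and extending the trivial section $\partial/\partial t$ across the saddle. Either way, $\phi$ descends to a surjection $\bar\phi:F/R\to\pi_1(B\mathcal{C}_d)$, where $R$ is the subgroup generated by the stated relations.

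The main obstacle is injectivity of $\bar\phi$, i.e.\ completeness of the relations. Via Theorem~\ref{vfcob.} this reduces to the statement that if two closed $d$-manifolds $N_0,N_1$ are vector-field cobordant through a $(d+1)$-manifold $V$ with a nowhere-vanishing vector field $X$ extending the normal sections on $\partial V$, then $[N_0]-[N_1]$ already lies in $R$. The plan is to pick a Morse function $h:V\to[0,1]$ with $h^{-1}(0)=N_0$ and $h^{-1}(1)=N_1$ and to decompose $V$ into the elementary cobordisms associated to the critical points of $h$. A relative Poincar\'e--Hopf count, together with the existence of $X$, forces the critical points to be arrangeable, after standard handle-cancellation manipulations, in cancelling pairs of consecutive indices; each cancelling pair yields four cobordism pieces $W_1,W_2,W_3,W_4$ meeting along a common intermediate manifold $M$, and the net effect of the pair on the level set is precisely the swap $(W_1\cup_M W_3)\sqcup(W_2\cup_M W_4)\leftrightarrow(W_1\cup_M W_4)\sqcup(W_2\cup_M W_3)$. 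Iterating over all pairs reduces $N_0$ and $N_1$ to the same element of $F/R$. The technical heart is therefore the Morse-theoretic step of matching handle cancellations to instances of the stated relation while tracking the single section; the oriented case follows by running the same argument with orientations chosen compatibly throughout.
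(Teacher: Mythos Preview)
Your setup, the surjectivity argument, and the verification that the chimera relations hold are correct and essentially match the paper (Proposition~\ref{chim} and the discussion around it). The identification of $[W]$ with $(W,\eps)$ is exactly Theorem~\ref{antalvf}.

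The injectivity argument, however, diverges sharply from the paper and contains a real gap. The paper does \emph{not} pass through Theorem~\ref{vfcob.} and Morse theory on the $(d+1)$-dimensional vector field cobordism. Instead it constructs an explicit inverse $\pi:\pi_1(B\Cob_d^\theta)\to F/C$ by the formula~\eqref{mprod}, assigning to any zigzag a signed sum of closed manifolds, and then proves $\pi$ is well-defined by invoking the simplicial classification of zigzag homotopies (Theorem~\ref{zzrel}): two zigzags are homotopic rel endpoints iff they are related by Type~\ref{rel1} and Type~\ref{rel2} moves. Lemmas~\ref{opp}, \ref{zzlim}, \ref{zzdiff} check that the formula~\eqref{mprod} is invariant under these moves modulo chimera relations. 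Injectivity then follows because $\pi$ is visibly surjective and the composite with $F/C\to\pi_1$ is the identity. This is a purely categorical argument inside $B\Cob_d^\theta$ and never looks at a $(d{+}1)$-manifold.

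Your Morse-theoretic plan would need substantially more to become a proof. The relative Poincar\'e--Hopf constraint coming from the vector field $X$ only gives $\sum_k(-1)^k c_k=0$ for the critical points of an unrelated Morse function $h$; it does not force the critical points to occur in cancelling pairs of \emph{consecutive} indices (think of two index-$0$ and two index-$2$ points), and handle-cancellation in the Smale sense requires algebraic intersection conditions that have no reason to hold here. More seriously, even granting a pair of adjacent-index critical points, the assertion that passing through both realises a single chimera swap on the level set is not justified: an elementary surgery of index $k$ followed by one of index $k{+}1$ on unrelated attaching spheres need not return the level set to anything chimera-equivalent to the original, and you give no mechanism that produces the four pieces $W_1,\dots,W_4$ sharing a common $M$. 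Without that, the inductive step collapses. If you want to pursue this route you would need a much finer control of the handle decomposition than the Euler characteristic provides; the paper sidesteps all of this by working entirely with the combinatorics of the nerve.
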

Here $\cup_M  $ denotes the composition of morphisms given by glueing along a common boundary.
There is also a version of the theorem for manifolds with tangential structures under certain conditions. 

The authors wish to thank Johan Dupont for pointing our attention to the connection between cobordism and vector fields.

\section{The cobordism category and related spectra}\label{embcob}
We first recall the embedded cobordism category, following the definition in~\cite{monoids}. 
Let $G(d,n)$ denote the Grassmannian consisting of all $d$-dimensional subspaces of $\R^{n+d}$. 
The splitting $\R^{n+d+l}\cong \R^{n+d}\oplus \R^l$ induces an inclusion $j:G(d,n) \to G(d+l,n)$.
\begin{defi}
Let $\theta: X \to BO(d+l)$ be a fibration.
As a set, $\Psi_{\theta_{d}}(\R^{n+d})$ consists of all pairs $(M,\bar{\xi})$ where $M \subseteq \R^{n+d}$ is an embedded $d$-dimensional manifold without boundary such that $M$ is a closed subset of $\R^{n+d}$ and $\bar{\xi}$ is a
 lift under $\theta $ of the classifying map $\xi : M \to G(d,n) \xrightarrow{j} G(d+l,n)$. 
A suitable topology on $\Psi_{\theta_{d}}(\R^{n+d})$ is given in \cite{monoids}.

Let $\psi_{\theta_{d}}(n+d,k)$ denote the subspace consisting of those $M$ that are contained in $ (-1,1)^{n+d-k}\times \R^{k}$.
\end{defi}

\begin{defi}
Let $\theta : X \to BO(d)$ be a fibration.
The cobordism category $\mathcal{C}_{d,n+d}^\theta$ is a topological category with object space $\psi_{\theta_{d-1}}(n+d-1,0)$. The space of morphisms  is the disjoint union of the identity morphisms and a subspace of $\psi_{\theta_{d}}(n+d,1) \times (0,\infty)$. A pair $(W,a)$ is a morphism from $M_0$ to $M_1$ if  $W \in \psi_{\theta_{d}}(n+d,1) $ is such that for some $\epsilon > 0$, 
\begin{align*}
W \cap ( \R^{n+d-1}\times (-\infty,\epsilon)) &=   M_0 \times (-\infty, \epsilon)\\
W \cap ( \R^{n+d-1} \times (a-\epsilon, \infty))&=   M_1 \times (a- \epsilon, \infty)
\end{align*}
such that the $\theta$-structures agrees.
Composition of the morphisms $(W,a)$ and $(W',a')$ is given by $(W\circ W',a+a')$ where $W\circ W'$ is the union of $W \cap (\R^{n+d-1} \times (-\infty,a])$ and $W' \cap (\R^{n+d-1} \times [0,\infty)) + ae_{n+d}$.
\end{defi}
We leave the $a$ out of the notation for the morphisms when it plays no significant role. 
The splitting $\R^{1+n+d} = \R \oplus \R^{n+d}$ defines an inclusion $i:G(d,n) \to G(d,1+n)$ and hence an inclusion of categories $\mathcal{C}_{d,n+d}^\theta\to \mathcal{C}_{d,1+n+d}^\theta$.
We usually let $n$ tend to infinity and denote the resulting category by $\mathcal{C}_d^\theta$ with objects $\Ob(\Cob_d^\theta)$ and morphisms $\Mor(\Cob_d^\theta)$. The subspace consisting of the morphisms from $M_0$ to $M_1$ is denoted by $ \mathcal{C}_{d}^\theta(M_0,M_1)$.

Let $N_k(\mathcal{C}_d^\theta)$ be the $k$th nerve of the category. 
Then the classifying space $B\mathcal{C}_d^\theta$  is the topological space
\begin{equation*}
\bigsqcup N_k(\mathcal{C}_d^\theta) \times \Delta^k / \sim .
\end{equation*}
Here $\bigsqcup $ is disjoint union, $\Delta^k$ is the standard $k$-simplex, and the equivalence relation $\sim $ is given by the face and degeneracy operators, see e.g.\ \cite{may} for the precise relations.

The Galatius--Tillmann--Madsen--Weiss theorem is now the following theorem, proved in \cite{GMTW} and in the above set-up, in \cite{monoids}:
\begin{thm}\label{RW}
There is a weak homotopy equivalence
\begin{equation*}
\alpha_{d,\theta} : B\mathcal{C}_d^\theta \to \Omega^{\infty +d-1} \theta^*MTO(d).
\end{equation*}
\end{thm}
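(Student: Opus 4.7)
The plan is to follow the sheaf-of-manifolds framework from~\cite{monoids}. The auxiliary spaces $\psi_{\theta_d}(n+d,k)$ with $k=0,1,\dots,n+d$ interpolate between the cobordism category and the Thom spectrum, so the idea is to produce a zig-zag of weak equivalences
\[
B\mathcal{C}_d^\theta\;\xleftarrow{\simeq}\;\bigl|N_\bullet^{\mathrm{cut}}\bigr|\;\xrightarrow{\simeq}\;\psi_{\theta_d}(n+d,1)\;\xrightarrow{\simeq}\;\Omega^{n+d-1}\Psi_{\theta_d}(\R^{n+d})\;\xrightarrow{\simeq}\;\Omega^{n+d-1}\theta^*\Th(\gamma_{d,n}^\perp),
\]
and then pass to the colimit $n\to\infty$ to obtain $\Omega^{\infty+d-1}\theta^*MTO(d)$. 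The map $\alpha_{d,\theta}$ itself is constructed from a parametrised Pontryagin--Thom collapse: to a composable string of morphisms one associates the canonical section of a tubular neighbourhood of the underlying $d$-manifold, suitably glued.

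The steps I would carry out, in order: first, replace the ordinary nerve of $\mathcal{C}_d^\theta$ by a semisimplicial space $N_\bullet^{\mathrm{cut}}$ whose $p$-simplices are pairs $(W, a_0<\cdots<a_p)$ with $W\in\psi_{\theta_d}(n+d,1)$ cylindrical in a neighbourhood of each slice $\R^{n+d-1}\times\{a_i\}$; a standard comparison of nerves yields the first equivalence. Second, forget the cut data to land in $\psi_{\theta_d}(n+d,1)$ and show the forgetful map is a weak equivalence, using that the space of admissible cut-points is non-empty and contractible parametrically on compact families. Third, establish the ``scanning'' equivalences $\psi_{\theta_d}(n+d,k)\simeq\Omega\psi_{\theta_d}(n+d,k+1)$ and iterate $n+d-1$ times. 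Fourth, identify $\Psi_{\theta_d}(\R^{n+d})$ with $\theta^*\Th(\gamma_{d,n}^\perp)$ by a Pontryagin--Thom construction sending a manifold to a section of its normal bundle supported in a small tubular neighbourhood. Fifth, pass to the colimit $n\to\infty$; compatibility of every step with the stabilisation maps $G(d,n)\to G(d,n+1)$ delivers the final equivalence.

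The main obstacle is the scanning step. Proving $\psi_{\theta_d}(n+d,k)\simeq\Omega\psi_{\theta_d}(n+d,k+1)$ reduces to a sheaf-theoretic microflexibility assertion: given a compact parametrised family of manifolds that meet a slab $\R^{n+d-1}\times[-\eps,\eps]$ in the $k$th coordinate, one must be able to concentrate the interesting geometry into an arbitrarily thin slab, coherently in the parameter. This requires the delicate colimit topology placed on $\Psi_{\theta_d}(\R^{n+d})$ in~\cite{monoids}, combined with a parametrised translation and surgery argument, and is essentially where all the geometric content of Galatius--Madsen--Tillmann--Weiss is concentrated. Once scanning is in hand, the other steps are essentially formal bookkeeping: the nerve comparison is a routine cofinality argument, and the Pontryagin--Thom identification is the classical one carried through $\theta$-structures.
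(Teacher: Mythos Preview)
The paper does not give its own proof of this theorem: immediately before the statement it says ``proved in \cite{GMTW} and in the above set-up, in \cite{monoids}'', and the theorem is then used as a black box throughout. So there is no proof in the paper to compare your proposal against.

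That said, your outline is a faithful sketch of the argument in \cite{monoids}, which is precisely the reference the paper invokes for this formulation. The sequence you describe---replacing the nerve by a poset-of-cuts model, forgetting the cut data to land in $\psi_{\theta_d}(n+d,1)$, iterating the scanning equivalence $\psi_{\theta_d}(n+d,k)\simeq\Omega\psi_{\theta_d}(n+d,k+1)$, and finally identifying $\Psi_{\theta_d}(\R^{n+d})$ with the Thom space via Pontryagin--Thom---matches the structure of \cite{monoids}, Sections~3 and~6. Your assessment that the scanning step carries the geometric content is accurate. If you intend this as more than a pointer to the literature, be aware that the second step (forgetting cut data) is not merely ``the space of admissible cut-points is contractible'': in \cite{monoids} it goes through an intermediate category $D_\theta$ and uses a group-completion style argument (their Theorem~3.10), which is itself nontrivial.
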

Here $\theta^*MTO(d)$ is the spectrum defined as follows: Let $U_{d,n} \to G(d,n)$ be the  universal bundle with complement $U_{d,n}^\perp$. Then $MTO(d)$ is the spectrum with $n$th space the  Thom space 
$\Th(U_{d,n}^\perp)$.
 The spectrum maps are induced by the inclusion $i$: 
\begin{equation*} 
\Sigma \Th(U_{d,n}^\perp) = \Th(i^*U_{d,1+n}^\perp) \to \Th(U_{d,1+n}^\perp).
\end{equation*}

Given a fibration $\theta: X \to BO(d)$, the spectrum $\theta^*MTO(d)$ is defined similarly with $n$th space
$\Th(\theta^*U_{d,n}^\perp \to \theta^{-1}(G(d,n)))$. Two important special cases are $X=BO(d)$ and $X = BSO(d)$. Since most contructions below work the same way in both cases, we shall write $MT(d)$ for the spectrum, $B(d)$ and $G(d,n)$ for the corresponding classifying spaces, and $\Cob_d$ for the cobordism category whenever there is no essential difference. 

The inclusion $G(d,n) \to G(d+1,n)$ is $d$-connected for $n$ large, so $\varinjlim_d MT(d)$ is the Thom cobordism spectrum. Thus the Pontryagin--Thom theorem identifies the lower homotopy groups $\pi_k(MT(d))$ as the oriented or unoriented cobordism group $\Omega_k$, respectively, when $k<d$. See e.g.\ \cite{stong} for more on classical cobordism theory.

In the proof of Theorem \ref{vfcob.}, we shall consider the fibration 
\begin{equation*}
V_{d,r} \to V_r(U_{d}) \xrightarrow{i_r} B(d). 
\end{equation*}
Here $V_{d,r}$ is the Stiefel manifold consisting of ordered $r$-tuples of orthonormal vectors in $\R^{d}$. For any vector bundle $E\to X$, $V_r(E)\to X$ will denote the fiber bundle with fiber over $x\in X$ the set of ordered $r$-tuples of orthonormal vectors in $E_x$. 

The corresponding cobordism category will be denoted $\mathcal{C}_d^r$. The objects are embedded compact $(d-1)$-dimensional manifolds $M$ equipped with $r$ orthonormal sections in $TM \oplus \R$. The morphisms are embedded cobordisms with $r$ orthonormal tangent vector fields extending the ones given on the boundary. 

Reading through the definition of the map $\alpha_{d,\theta}$ in Theorem \ref{RW}, one sees that there is a commutative diagram 
\begin{equation}\label{forget}
\vcenter{\xymatrix{{B\mathcal{C}_d^r}\ar[r]^-{BF}\ar[d]^{\alpha_{d,r}}&{B\mathcal{C}_d}\ar[d]^{\alpha_{d}}\\
{\Omega^{\infty +d-1} i_r^*MT(d)}\ar[r]^-{i_r}&{\Omega^{\infty +d -1} MT(d).}
}}
\end{equation}
Here $BF$ is the map induced by the functor $F$ that forgets the tangential structure. 

There is an inclusion $B(d-r)\to V_r(U_d)$ taking $P\subseteq \R^{n+d-r}$ to $P\oplus \R^r \subseteq \R^{n+d-r}\oplus \R^r$ with the $r$ standard basis vectors in $\R^r$ as $r$-frame. This is a homotopy equivalence, as it is the fiber inclusion for the fibration
\begin{align*}
B(d-r)\to V_r(U_d) \to V_{\infty ,r}
\end{align*}
where $V_{\infty ,r} = \varinjlim_n V_{n+d,r}$ is contractible. Thus the inclusion $MT(d-r)\to i_r^*MT(d)$ is a homotopy equivalence. This yields the isomorphisms
\begin{equation*}
\pi_{r+1}(B\Cob_{d-r}) \cong \pi_d(MT(d-r)) \cong \pi_1(B\Cob_d^{r}). 
\end{equation*}
This reduces the proof of Theorem \ref{vfcob.} to a study of $\pi_1(B\Cob_d^{r})$. Hence the rest of this paper will only be concerned with the fundamental group of $B\Cob_d^\theta$.

\section{Representing classes in $\pi_1(B\mathcal{C}_d^\theta)$ by closed manifolds}\label{reppi1}
In this section, we consider a general cobordism category corresponding to a fibration $\theta : X \to BO(d)$.
\begin{defi}
Let $(W,a)\in \Cob_d^\theta(M_0,M_1)$. The 1-simplex $\{(W,a)\}\times \Delta^1$ inside $B\Cob_d^\theta$ defines a path $\gamma_{(W,a)}$ between the points corresponding to $M_0$ and $M_1$. A concatenation of finitely many  such paths and their inverses (denoted $\bar{\gamma}_{(W,a)}$) will be called a zigzag of morphism paths.
\end{defi}
The goal of this section is to find conditions on the category $\Cob_d^\theta$ such that all elements of  $\pi_1(B\mathcal{C}_d^\theta)$ can be represented by a single morphism path $\gamma_{(W,a)}$ for some closed manifold $W$, considered as a morphism from the empty manifold to itself.
We start out by showing that elements of $\pi_1(B\Cob_d^\theta)$ are represented by zigzags.

First some notation. A path $\alpha : [0,1] \to \Ob(\Cob_d^\theta)$ that is smooth in the sense of \cite{monoids} and constant near the endpoints determines a morphism $(W_\alpha,1)$ such that $W_\alpha \subseteq  \R^{\infty-1} \times \R$ with $W_\alpha \cap (\R^{\infty-1} \times  \{t\})= \alpha(t)$ for all $t\in [0,1]$ and such that the projection $W_\alpha \to \{0\}\times \R$ is a submersion. We want to see that $\alpha \simeq \gamma_{W_\alpha}$ in $B\Cob_d^\theta$.

\begin{lem} \label{obsti}
The concatenation of a non-identity morphism path $\gamma_{(W,a)}$ and a smooth path $\alpha : I \to \Ob(\Cob_d^\theta)$ that is constant near  endpoints, is homotopic relative to endpoints inside $\Ob(\Cob_d^\theta) \cup (\Mor(\Cob_d^\theta) \times \Delta^1)$ to the morphism path $\gamma_{(W\circ W_\alpha,a+1)}$.
\end{lem}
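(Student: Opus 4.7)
The plan is to construct an explicit homotopy inside the $1$-skeleton $S := \Ob(\Cob_d^\theta) \cup (\Mor(\Cob_d^\theta) \times \Delta^1)$ by interpolating along a continuous family of morphisms $\widehat{W}_u$ that joins $(W, a)$ to $(W \circ W_\alpha, a+1)$ through morphisms from $M_0$ to intermediate objects on the path $\alpha$.

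For the family: for each $u \in [0,1]$, let $\widehat{W}_u$ be the composition of $W$ with a truncation of $W_\alpha$ at time $u$, smoothly modified near the moving boundary by rescaling the time coordinate so that the truncated end becomes cylindrical with target object $\alpha(u)$. Since $\alpha$ is smooth in the sense of \cite{monoids}, such a modification can be chosen continuously in $u$, requiring no modification at $u = 0$ or $u = 1$ (where $\alpha$ is already constant near the relevant endpoint). The result is a continuous path $u \mapsto (\widehat{W}_u, a+u)$ in $\Mor$ from $(W, a)$ to $(W \circ W_\alpha, a+1)$; the non-identity hypothesis on $(W, a)$ ensures that the length $a + u \geq a > 0$ keeps the whole family inside the non-identity component of $\Mor$.

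Next, define $H \colon [0,1] \times [0,1] \to S$ by
\[
H(s, u) = \begin{cases} \bigl( \widehat{W}_u,\ s/f(u) \bigr) \in \Mor \times \Delta^1 & \text{if } s \leq f(u), \\[4pt] \alpha\bigl( u + (s - f(u))(a+1) \bigr) \in \Ob & \text{if } s \geq f(u), \end{cases}
\]
where $f(u) = (a+u)/(a+1)$. Continuity across $s = f(u)$ holds because the boundary point $(\widehat{W}_u, 1)$ of the $1$-simplex is identified via the face map with its target $\alpha(u) \in \Ob$, matching the value of $\alpha$ in the second branch. The endpoint conditions $H(0, u) = M_0$ and $H(1, u) = \alpha(1)$ hold throughout. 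At $u = 0$, the two branches give $\gamma_{(W,a)}$ on $[0, a/(a+1)]$ and $\alpha$ on $[a/(a+1), 1]$ (up to reparametrization), i.e.\ the required concatenation; at $u = 1$ the second branch collapses to the point $\alpha(1)$ and the first reproduces $\gamma_{(W \circ W_\alpha, a+1)}$.

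The main obstacle is the construction of $\widehat{W}_u$: arranging the collar modification near the moving endpoint so that it depends continuously on $u$, respects the $\theta$-structure, and reduces to the trivial modification at $u = 0$ and $u = 1$. This is technical but standard, using bump functions and smooth families of time-axis diffeomorphisms together with the smoothness of $\alpha$; once $\widehat{W}_u$ is in hand the remaining verifications are formal manipulations with the face-map identifications defining $B\Cob_d^\theta$.
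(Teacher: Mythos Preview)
Your argument is correct and is, at the level of ideas, the same as the paper's: both build a continuous family of morphisms $(\widehat W_u,a+u)$ from $M_0$ to $\alpha(u)$ interpolating between $(W,a)$ and $(W\circ W_\alpha,a+1)$, and then take the homotopy $\gamma_{(\widehat W_u,a+u)}\cdot \alpha_{|[u,1]}$, which is exactly your $H(\cdot,u)$ after reparametrization.

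The difference is in how the interpolating family is produced. You construct $\widehat W_u$ by hand, truncating $W_\alpha$ at height $u$ and inserting a collar modification to make the cut cylindrical; this is indeed standard but requires care with bump functions and the $\theta$-structure. The paper instead routes the construction through the auxiliary categories $D_\theta$ and $D_\theta^\perp$ of \cite{monoids}, Theorem~3.9: in $D_\theta$ one may simply slide the upper endpoint along the regular values $a+t$ of the height function on $W\circ W_\alpha$, and the homotopy inverse of $D_\theta^\perp\to D_\theta$ then supplies the required ``stretching'' near that slice automatically. The paper's packaging is cleaner and avoids writing out the collar modification, at the cost of importing the equivalence of categories; your version is more self-contained and makes the geometry explicit.
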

\begin{proof}
Recall from \cite{monoids}, Theorem 3.9, that there are homotopy equivalences of categories
\begin{equation}\label{cathom}
\Cob_d^\theta \xleftarrow{c} D_\theta^\perp \xrightarrow{i} D_\theta.
\end{equation}
The precise definitions of the categories $D_\theta^\perp$ and $D_\theta$ are given in \cite{monoids}, Definition 3.8.

The morphism $(W \circ W_\alpha,a+1)$ corresponds to the morphism $(W\circ W_\alpha,0\leq a+1)$ in the category $ D_\theta$. In $\Mor(D_\theta)$,
\begin{equation*}
t\mapsto (W\circ W_\alpha,0\leq a+t)
\end{equation*}
is a path from $(W\circ W_\alpha,0\leq a)$ to $(W\circ W_\alpha,0\leq a+1)$ because all $a+t \in [a,a+1]$ are regular values for the projection $W\circ W_\alpha \to \{0\}\times \R$. 

This lifts to a path $(W_t,0\leq a+t)$ in $\Mor(D_\theta^\perp)$ under the inverse of the homotopy equivalence $i$. This just stretches $W\circ W_\alpha$ near $W\circ W_\alpha \cap ( \R^{\infty-1} \times \{a+t\})$ and leaves the rest fixed. In particular, $W_t\cap (\R^{\infty -1} \times \{a+t\})= \alpha(t)$.

Now, $c(W_t,0\leq a+t) = (W_t',a+t)$ defines a path from $(W,a)$ to $(W\circ W_\alpha,a+1)$ with  $W_t'\cap (\R^{\infty-1}\times \{a+t\})=\alpha(t) $. 
Thus $\gamma_{(W_t',a+t)}\cdot\alpha_{\mid [t,1]}$ is a homotopy from $\gamma_W\cdot \alpha$ to $\gamma_{W\circ W_\alpha}$.
\end{proof}

\begin{lem}\label{obmor}
A smooth path $\alpha : [0,1] \to \Ob(\Cob_d^\theta)$ that is constant near endpoints is homotopic relative to endpoints  to $\gamma_{W_\alpha}$ inside $B\Cob_d^\theta$.
\end{lem}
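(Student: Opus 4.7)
The plan is to reduce the statement to Lemma \ref{obsti} by manufacturing a non-identity morphism to precompose with $\alpha$, and then cancelling it away using standard path-homotopy manipulations. Since the identity morphisms are only formally adjoined to the morphism space, the cylinder $W := M_0 \times [0,1]$ over $M_0 = \alpha(0)$, carrying the pulled-back $\theta$-structure, is a bona fide non-identity morphism from $M_0$ to itself.

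First I would apply Lemma \ref{obsti} to $(W,1)$ and $\alpha$, obtaining a homotopy rel endpoints
\[\gamma_{(W,1)} \cdot \alpha \simeq \gamma_{W \circ W_\alpha}\]
inside $B\Cob_d^\theta$. Next I would invoke the simplicial structure of $B\Cob_d^\theta$: the composable pair $(W, W_\alpha)$ is a $2$-simplex in $N_2(\Cob_d^\theta)$, whose image in $B\Cob_d^\theta$ yields the standard homotopy rel endpoints
\[\gamma_W \cdot \gamma_{W_\alpha} \simeq \gamma_{W \circ W_\alpha}.\]
Concatenating these two steps gives $\gamma_W \cdot \alpha \simeq \gamma_W \cdot \gamma_{W_\alpha}$ rel endpoints.

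To finish, I would prepend the reverse path $\bar{\gamma}_W$ and use the elementary fact that $\bar{\gamma}_W \cdot \gamma_W$ is null-homotopic rel endpoints in any topological space, yielding the chain
\[\alpha \simeq \bar{\gamma}_W \cdot \gamma_W \cdot \alpha \simeq \bar{\gamma}_W \cdot \gamma_W \cdot \gamma_{W_\alpha} \simeq \gamma_{W_\alpha}\]
of homotopies rel endpoints in $B\Cob_d^\theta$, which is the desired conclusion.

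I do not foresee any serious obstacle: the lemma is essentially a corollary of Lemma \ref{obsti} once one notices that a cylinder can play the role of the required non-identity morphism. The only point to verify is that this cylinder genuinely lies in the non-identity part of $\Mor(\Cob_d^\theta)$ and carries a canonical $\theta$-structure restricting to that of $M_0$, both of which are immediate from the product structure.
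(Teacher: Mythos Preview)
Your argument is correct and coincides with the paper's own proof: set $W$ to be the cylinder on $M_0$, apply Lemma~\ref{obsti} to get $\gamma_W\cdot\alpha\simeq\gamma_{W\circ W_\alpha}$, use the $2$-simplex $(W,W_\alpha)$ to replace $\gamma_{W\circ W_\alpha}$ by $\gamma_W\cdot\gamma_{W_\alpha}$, and cancel $\gamma_W$. The only cosmetic discrepancy is that in the conventions of \cite{monoids} a non-identity morphism is a non-compact manifold in $\psi_{\theta_d}(n+d,1)$, so one writes $W=M_0\times\R$ (with parameter $a=1$) rather than $M_0\times[0,1]$.
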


\begin{proof} 
Let $M=\alpha(0)$. Then $ W=M\times \R$ is a morphism. By Lemma \ref{obsti}, $\gamma_W\cdot \alpha$ is homotopic to $\gamma_{W\circ W_\alpha}$. There is a 2-simplex inside $B\Cob_d^\theta$ making $\gamma_{W\circ W_\alpha}$ homotopic to $\gamma_{W}\cdot \gamma_{ W_\alpha}$. Composing with $\bar{\gamma}_{W}$ proves the claim.
\end{proof}

\begin{thm}\label{morph}
Any path between two objects in $B\Cob_d^\theta$ is homotopic relative to endpoints to a zigzag of morphism paths.
\end{thm}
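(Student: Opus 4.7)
The plan is to first push $\gamma$ into the 1-skeleton of $B\Cob_d^\theta$, and then convert the resulting path into a zigzag by handling its object-space pieces via Lemma \ref{obmor}.

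For the first step, compactness of $I$ together with the quotient-space definition of $B\Cob_d^\theta$ allow me to subdivide $\gamma$ into finitely many subpaths, each of whose image lies in a single simplex $N_k(\Cob_d^\theta)\times \Delta^k$. For $k\ge 2$, a small perturbation in the $\Delta^k$-factor moves such a subpath off the barycenter of $\Delta^k$; radial projection from the barycenter then provides a homotopy into $N_k(\Cob_d^\theta)\times \partial \Delta^k$, which in the realization already sits inside simplices of strictly lower nerve-degree. Iterating on $k$ brings the whole path into the 1-skeleton.

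The 1-skeleton is the pushout obtained by gluing $\Mor(\Cob_d^\theta)\times \Delta^1$ to $\Ob(\Cob_d^\theta)$ along source and target, with identity morphisms collapsed. A path in the 1-skeleton decomposes, after subdivision, into pieces that lie either entirely in $\Ob(\Cob_d^\theta)$ or entirely in a single 1-simplex $\{(W,a)\}\times \Delta^1$. Since the open interior of such a 1-simplex is not identified with anything else in $B\Cob_d^\theta$, a piece of the second type must begin and end at the two vertices $\{(W,a)\}\times \{0,1\}$, and is therefore homotopic rel endpoints inside the 1-simplex to $\gamma_{(W,a)}$, $\bar{\gamma}_{(W,a)}$, or a constant path. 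Each object-space piece can first be smoothly approximated by a path $\alpha$ that is constant near its endpoints, after which Lemma \ref{obmor} rewrites it as the morphism path $\gamma_{W_\alpha}$ up to homotopy rel endpoints. Concatenating these replacements yields the desired zigzag of morphism paths.

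The main obstacle is the first step: since $N_k(\Cob_d^\theta)$ is itself an infinite-dimensional topological space, $B\Cob_d^\theta$ is not a CW complex in the naive sense, so the classical cellular approximation theorem does not apply directly. The required deformations only act on the $\Delta^k$-coordinate, however, so the argument should go through fiberwise over $N_k(\Cob_d^\theta)$, provided the perturbation away from the barycenter is chosen continuously over compact families. This uses the goodness of the simplicial space underlying the nerve, which is verified for $\Cob_d^\theta$ in \cite{monoids}.
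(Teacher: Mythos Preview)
Your overall approach matches the paper's: push the path into the $1$-skeleton $\Ob(\Cob_d^\theta)\cup(\Mor(\Cob_d^\theta)\times\Delta^1)$, break it into object-space pieces and morphism-type pieces, and convert the object-space pieces via smoothing (the paper cites \cite[Lemma~2.18]{monoids} for this) and Lemma~\ref{obmor}.

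There is, however, a real gap in your second paragraph. You assert that after subdivision a non-object piece lies ``entirely in a single $1$-simplex $\{(W,a)\}\times\Delta^1$'' and hence is homotopic to $\gamma_{(W,a)}$, $\bar\gamma_{(W,a)}$, or a constant. This would be correct if $\Mor(\Cob_d^\theta)$ were discrete, but it is not: a path in $\Mor(\Cob_d^\theta)\times\Delta^1$ can move continuously in the $\Mor$-factor, so there is no reason it stays over a fixed morphism. You yourself note in the last paragraph that $N_k(\Cob_d^\theta)$ is an infinite-dimensional space; the same applies for $k=1$, and this is exactly where it bites. The statement that ``the open interior of such a $1$-simplex is not identified with anything else'' is true of the quotient relation but irrelevant: neighbouring interiors are glued topologically through the $\Mor$-direction.

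The repair is easy and is what the paper's terse sentence ``such an $f$ may again be deformed\ldots'' is hiding. Given a piece $t\mapsto(W(t),s(t))$ in $\Mor(\Cob_d^\theta)\times[0,1]$ whose endpoints land in $\Mor(\Cob_d^\theta)\times\{0,1\}$ (and hence in $\Ob(\Cob_d^\theta)$), use the product structure to homotope it rel endpoints to the concatenation of $t\mapsto(W(t),s(0))$, which lies in $\Mor(\Cob_d^\theta)\times\{s(0)\}$ and thus projects to a path in $\Ob(\Cob_d^\theta)$ via $\partial_{s(0)}$, followed by the vertical segment $s\mapsto(W(1),s)$ from $s(0)$ to $s(1)$, which is $\gamma_{W(1)}$, $\bar\gamma_{W(1)}$, or constant. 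This produces the required decomposition into object-space paths and morphism paths, after which your use of Lemma~\ref{obmor} goes through.
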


\begin{proof}
Let $f : [0,1] \to B\Cob_d^\theta $ be given such that $f(\{0,1\})\subseteq \Ob(\Cob_d^\theta)$. First we deform $f$ relative to the endpoints to have image in $\Ob(\Cob_d^\theta) \cup (\Mor(\Cob_d^\theta) \times \Delta^1)$.
Such an  $f$ may again be deformed to a composition of finitely many morphism paths, inverse morphism paths, and paths in the object space. 

By \cite{monoids}, Lemma 2.18, each path $\alpha: I \to \Ob(\Cob_d^\theta)$ is homotopic to a smooth path $\alpha'$ that is constant near endpoints. By Lemma \ref{obmor}, this is homotopic to $\gamma_{W_{\alpha'}}$.
\end{proof}

We are now ready to give conditions under which every element of $\pi_1(B\Cob_d^\theta)$ may be represented by a single morphism path.

\begin{thm} \label{morsti}
Assume:
\begin{itemize}
\item[(i)] Any morphism $W \in \Cob_d^\theta(\emptyset,\emptyset)$ has an inverse $W^- \in \Cob_d^\theta(\emptyset,\emptyset)$ such that the disjoint union $W \circ W^-\in \Cob_d^\theta(\emptyset,\emptyset)$ defines a null-homotopic loop in $B\Cob_d^\theta$.
\item[(ii)] If $W \in \Cob_d^\theta(M_0,M_1)$, then there exists a morphism $\overline{W} \in \Cob_d^\theta(M_1,M_0)$ in the opposite direction.
\end{itemize}
In this case, any element of $\pi_1(B\Cob_d^\theta)$ can be represented by a morphism path $\gamma_{W}$ for some closed $\theta$-manifold $W \in \Cob_d^\theta(\emptyset,\emptyset)$.
\end{thm}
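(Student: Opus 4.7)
The plan is to invoke Theorem \ref{morph} to represent any class in $\pi_1(B\Cob_d^\theta,\emptyset)$ by a zigzag of morphism paths, and then to induct on the number of backward segments so as to reduce the zigzag to a single $\gamma_W$ with $W\in \Cob_d^\theta(\emptyset,\emptyset)$. The starting observation is that the set $S\subseteq \pi_1(B\Cob_d^\theta,\emptyset)$ of classes representable by a forward morphism path $\gamma_W$ of a closed $W$ is already a subgroup: it is closed under products by the $2$-simplex identity $\gamma_{W\circ W'}\simeq \gamma_W\cdot \gamma_{W'}$, and closed under inverses by (i), which together with the previous identity forces $[\bar{\gamma}_W]=[\gamma_{W^-}]$ for closed $W$. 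The goal is then $S=\pi_1(B\Cob_d^\theta,\emptyset)$.

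The main obstacle will be handling the ``conjugate correction'' $\gamma_P\cdot \gamma_L\cdot \bar{\gamma}_P$ that appears when one tries to flip a backward segment sitting over an intermediate non-empty object $B$. I would isolate this in an auxiliary lemma: for every $P\in \Cob_d^\theta(\emptyset,B)$ and every closed $L\in \Cob_d^\theta(B,B)$, the loop $\gamma_P\cdot \gamma_L\cdot \bar{\gamma}_P$ lies in $S$. Using (ii) to pick $\overline{P}\in \Cob_d^\theta(B,\emptyset)$ and inserting the null-homotopic $\gamma_{\overline{P}}\cdot \bar{\gamma}_{\overline{P}}$, one gets
\begin{equation*}
\gamma_P\cdot \gamma_L\cdot \bar{\gamma}_P \;\simeq\; (\gamma_P\cdot \gamma_L\cdot \gamma_{\overline{P}})\cdot (\bar{\gamma}_{\overline{P}}\cdot \bar{\gamma}_P) \;\simeq\; \gamma_{P\circ L\circ \overline{P}}\cdot \bar{\gamma}_{P\circ \overline{P}},
\end{equation*}
where the second step uses $2$-simplex on each half (and the observation that $\bar{\gamma}_{\overline{P}}\cdot \bar{\gamma}_P$ is the reverse of $\gamma_P\cdot \gamma_{\overline{P}}$). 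Both $P\circ L\circ \overline{P}$ and $P\circ \overline{P}$ are closed at $\emptyset$, so (i) turns the second factor into a forward morphism path of a closed manifold and the whole expression lies in $S$.

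With this lemma in hand I would run the induction on the number $k$ of backward segments of a zigzag $\sigma=\gamma_{W_1}^{\eps_1}\cdots \gamma_{W_n}^{\eps_n}$ representing a loop at $\emptyset$. The case $k=0$ is immediate from $2$-simplex. For $k\geq 1$ I would pick the first backward position $j$, so that $p=\gamma_1\cdots \gamma_{j-1}$ is all forward and satisfies $p\simeq \gamma_P$ for some $P\in \Cob_d^\theta(\emptyset,B_j)$, while $q=\gamma_{j+1}\cdots \gamma_n$ has only $k-1$ backward segments. Using $\overline{W_j}$ from (ii), replacing $\bar{\gamma}_{W_j}$ by $\gamma_{\overline{W_j}}$ produces a new loop $\sigma^+$ with $k-1$ backward segments, so $[\sigma^+]\in S$ by the inductive hypothesis. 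A direct cancellation of $q\cdot q^{-1}$ inside $\sigma^+\cdot \sigma^{-1}$, followed by $2$-simplex, gives
\begin{equation*}
\sigma^+\cdot \sigma^{-1}\;\simeq\; \gamma_P\cdot \gamma_{\overline{W_j}\circ W_j}\cdot \bar{\gamma}_P,
\end{equation*}
which lies in $S$ by the key lemma applied with $L=\overline{W_j}\circ W_j$. Since $S$ is a subgroup, $[\sigma]=[\sigma^+\cdot \sigma^{-1}]^{-1}\cdot [\sigma^+]\in S$, completing the induction.

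The hypotheses enter exactly where one expects. Condition (ii) is what both supplies $\overline{W_j}$ in the inductive step and allows one to ``close off'' the path $P$ in the key lemma, so that an intermediate loop $L$ at $B_j$ can be transported back to $\emptyset$. Condition (i) is what converts the residual $\bar{\gamma}_{P\circ \overline{P}}$ into a forward morphism path of a closed manifold and also gives closure of $S$ under inversion. The rest is $2$-simplex bookkeeping and routine group manipulation in $\pi_1$.
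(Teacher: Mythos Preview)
Your proof is correct and takes a genuinely different route from the paper's. The paper proceeds by one explicit global rewriting: starting from an alternating zigzag $\gamma_{W_1}\cdot\bar\gamma_{W_2}\cdots$, it inserts after each segment a forward trip all the way to $\emptyset$ along the chosen opposites $\overline{W}_i$ and back again, and then regroups the result into an alternating concatenation of loops at $\emptyset$, each entirely forward or entirely backward. The $2$-simplex relation collapses each such loop to a single $\gamma$ or $\bar\gamma$ of a closed manifold, condition~(i) converts the backward ones, and one more application of the $2$-simplex relation composes everything into a single $\gamma_W$. This yields an explicit representing manifold, and the resulting formula~\eqref{mprod} is in fact reused later to define the inverse of the map $F/C\to\pi_1(B\Cob_d^\theta)$.

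Your argument is instead structural: you observe that the set $S$ of classes representable by a single $\gamma_W$ is already a subgroup, isolate a conjugation lemma showing $\gamma_P\cdot\gamma_L\cdot\bar\gamma_P\in S$ for any endomorphism $L$ at an intermediate object, and then induct on the number of backward segments, flipping one at a time and absorbing the correction $\sigma^+\cdot\sigma^{-1}$ via the lemma. This is cleaner and shorter for proving the theorem as stated, though it does not directly produce the explicit representative the paper later exploits. One minor point of wording: calling $L\in\Cob_d^\theta(B,B)$ ``closed'' is misleading when $B\neq\emptyset$; ``endomorphism'' would be better.
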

From Theorem \ref{morph} we know that we can always represent an element of $\pi_1(B\Cob_d^\theta)$ as a zigzag of morphisms.
In general, the closed manifold cannot be chosen diffeomorphic to the one defined by glueing together the underlying manifolds in the zigzag, since this may not allow a $\theta $-structure.
\begin{proof}
Let $\gamma : I \to B\Cob_d^\theta$ be a path from the empty manifold to itself. By Theorem~\ref{morph}, we can assume that $\gamma $ is represented by a zigzag.

For a pair of composable morphisms $(W_1,W_2)\in N_2(\Cob_d^\theta)$,
 the corresponding 2-simplex $\{(W_1 , W_2)\}\times \Delta^2$ inside $B\Cob_d^\theta$ defines a homotopy 
\begin{equation} \label{glue}
\gamma_{W_1}\cdot \gamma_{W_2} \simeq \gamma_{W_1 \circ W_2}. 
\end{equation}
Thus we may assume that $\gamma$ is an alternating zigzag of morphism paths 
\begin{equation*}
\gamma = \gamma_{W_1}\cdot\bar{\gamma}_{W_2}\cdot \gamma_{W_{3}} \dotsm \bar{\gamma}_{W_n}.
\end{equation*}
 Of course, it could also happen that the first path is an inverse path or that $n$ is odd. These cases are similar.

For each  $i$, choose an opposite $\overline{W}_i$ of $W_i$, guaranteed by assumption (ii). Then 
\begin{align*}
\gamma =& \gamma_{W_1}\cdot \bar{\gamma}_{W_2}\cdot \gamma_{W_{3}} \dotsm \bar{\gamma}_{W_n}\\
\simeq &\gamma_{W_1} \cdot {({\gamma}_{\overline{W}_2}\cdot \gamma_{W_{3}} \dotsm {\gamma}_{\overline{W}_n})} \cdot \overline{({\gamma}_{\overline{W}_2}\cdot \gamma_{W_{3}} \dotsm {\gamma}_{\overline{W}_n})}\cdot \bar{\gamma}_{W_2} \cdot (\bar{\gamma}_{\overline{W}_{3}} \dotsm \bar{\gamma}_{W_n}) \\ 
&\cdot \overline{(\bar{\gamma}_{\overline{W}_{3}} \dotsm \bar{\gamma}_{W_n})} \dotsm {\gamma}_{W_{n-1}} \cdot(\gamma_{\overline{W}_n}) \cdot \overline{(\gamma_{\overline{W}_n})} \cdot \bar{\gamma}_{{W}_n}\\
=&(\gamma_{W_1}\cdot{\gamma}_{\overline{W}_2}\cdot\gamma_{W_{3}} \dotsm {\gamma}_{\overline{W}_n})\cdot (\bar{\gamma}_{\overline{W}_n}\dotsm \bar{\gamma}_{W_{3}}\cdot \bar{\gamma}_{\overline{W}_2} \cdot \bar{\gamma}_{W_2} \cdot \bar{\gamma}_{\overline{W}_{3}} \dotsm \bar{\gamma}_{W_n}) \\ 
&\cdot({\gamma}_{{W}_{n}} \dotsm {\gamma}_{\overline{W}_3} \cdot {\gamma}_{{W}_3} \dotsm {\gamma}_{\overline{W}_n}) \dotsm 
( {\gamma}_{{W}_n}\cdot {\gamma}_{\overline{W}_{n-1}}\cdot {\gamma}_{W_{n-1}}\cdot \gamma_{\overline{W}_n})\cdot (\bar{\gamma}_{\overline{W}_n}\cdot \bar{\gamma}_{{W}_n})\\
\simeq &(\gamma_{W_1\circ \overline{W}_2 \circ W_{3}\circ \dotsm \circ \overline{W}_n})\cdot (\bar{\gamma}_{{W}_n\circ \dotsm \circ \overline{W}_{3} \circ {{W}_2} \circ {\overline{W}_2} \circ {{W}_{3}}\circ \dotsm \circ {\overline{W}_n}})\\
&\cdot( {\gamma}_{{W}_{n}\circ \dotsm \circ {\overline{W}_3} \circ {{W}_3}\circ  \dotsm \circ {\overline{W}_n}}) \dotsm
( {\gamma}_{{{W}_n}\circ {\overline{W}_{n-1}} \circ W_{n-1} \circ \overline{W}_n}) \cdot (\bar{\gamma}_{{W}_n \circ \overline{W}_n}).
\end{align*} 
The idea is here that we first run along $\gamma_{W_1}$ as we are supposed to. Then we follow morphism paths in the positive direction all the way to the empty manifold and go back again, now following paths in the inverse direction. We run $\gamma_{W_2}$ backwards as we are supposed to and then follow paths in the inverse direction back to the base point and go back again. Continuing this way, we end up with a path that is homotopic to the original one. But this new path has the property that we always run from the base point to itself along paths that are either all positively directed or all negatively directed. Thus we may glue the manifolds together by \eqref{glue}. 

This yields an expression for $\gamma $ involving only paths of closed manifolds. We now apply assumption (i) to all the paths that are still travelled in the wrong direction. Finally we apply \eqref{glue} again to write $\gamma $ as a path corresponding to a single morphism:
\begin{align*}
\gamma \simeq & \gamma_{(W_1\circ \overline{W}_2 \circ W_{3}\circ \dotsm \circ \overline{W}_n)}\cdot {\gamma}_{({W}_n\circ \dotsm \circ {\overline{W}_{3}} \circ {{W}_2} \circ {\overline{W}_2} \circ {{W}_{3}}\circ \dotsm \circ {\overline{W}_n})^-}\\
&\cdot {\gamma}_{({W}_{n}\circ \dotsm \circ {\overline{W}_3} \circ {{W}_3}\circ \dotsm \circ {\overline{W}_n})} \dotsm 
 {\gamma}_{({{W}_n}\circ{\overline{W}_{n-1}} \circ W_{n-1} \circ \overline{W}_n)}\cdot {\gamma}_{({W}_n \circ \overline{W}_n)^-}\\
\simeq &\gamma_{(W_1\circ \overline{W}_2 \circ W_{3}\circ \dotsm \circ \overline{W}_n)\circ({W}_n\circ \dotsm \circ \overline{W}_{3} \circ {{W}_2} \circ {\overline{W}_2} \circ {{W}_{3}}\circ \dotsm \circ {\overline{W}_n})^-} \\
&_{\circ {({W}_{n}\circ \dotsm \circ {\overline{W}_3} \circ {{W}_3} \circ\dotsm \circ {\overline{W}_n})} \circ  \dotsm
{ \circ ({{{W}_n}\circ {\overline{W}_{n-1}} \circ W_{n-1} \circ \overline{W}_n}) \circ ({W}_n \circ \overline{W}_n)^-}.}
\end{align*}
\end{proof}

\section{Geometric interpretation of the invariants}\label{interprinv}
We now return to the cobordism category with vector fields. We start out by showing that $\Cob_d^r$ satisfies the conditions (i) and (ii) of Theorem \ref{morsti}. 
From this we obtain the geometric interpretations of the homotopy groups $\pi_d(MT(d-r))\cong \pi_{r+1}(B\Cob_d)$.

\begin{thm}\label{vende}
Let $d$ be odd or $r < \frac{d}{2}$. Let $W \in \Cob_d^r(M_0,M_1)$. Then there exists a $\overline{W} \in \Cob_d^r(M_1,M_0)$.
\end{thm}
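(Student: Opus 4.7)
The natural construction is to reflect $W$ in the $t$-direction and then correct the resulting boundary frames.

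First, I set $\overline{W}_0 = \{(x, a - t) : (x, t) \in W\} \subset \R^{n+d-1} \times [0,a]$, which is a cobordism from $M_1$ to $M_0$ as an embedded manifold. The reflection diffeomorphism pushes each orthonormal field $v_j = u_j + c_j\partial_t$ on $W$ to an orthonormal field $\bar v_j = u_j - c_j\partial_s$ on $\overline{W}_0$, where $s = a-t$.

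Next, I check the boundary data. Using that $T\overline{W}_0|_{M_i}$ is identified with $TM_i \oplus \R$ via the inward normal at the incoming boundary $M_1$ and the outward normal at the outgoing $M_0$, a short computation shows that $\bar v_j$ restricts to $(u_j, -c_j)$ at each $M_i$, whereas the prescribed boundary frame is $(u_j, c_j)$. The tangential part is correct and only the sign of the $\R$-component is wrong.

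To fix this I would pre- and post-compose $\overline{W}_0$ with cylindrical cobordisms $M_i \times [0,1]$ carrying $r$-frames that interpolate between $(u_j, c_j)$ and $(u_j, -c_j)$ through orthonormal $r$-frames. Equivalently, I need a homotopy of sections $M_i \to V_r(TM_i \oplus \R)$ between these two frames. Pointwise over $x \in M_i$, such a homotopy is given by a $\pi$-rotation in the 2-plane spanned by the $\R$-direction $e_d$ and a unit vector $w(x) \in TM_i|_x$ orthogonal to $\spa(u_1(x), \dotsc, u_r(x))$; a direct computation shows that the rotated $r$-tuple remains orthonormal throughout and at time $\pi$ flips precisely the $\R$-component.

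The crux is producing such a $w$ globally on $M_i$, allowing oneself to first homotope the $v_j$'s through orthonormal $r$-frames if needed. The candidate bundle is the sphere bundle of $\spa(u_1,\dotsc,u_r)^\perp \subset TM_i$, of rank $d-1-r$. When $r < d/2$ this rank exceeds half of $\dim M_i$, and combined with the $(d-r-1)$-connectivity of $V_{d,r}$ standard obstruction theory yields the required section. When $d$ is odd one argues differently, exploiting the odd dimensionality of $W$ together with vanishing of an Euler-class-type obstruction to perform the modification directly inside $\overline{W}_0$ rather than in separate boundary cylinders.

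This global construction of $w$ (or its substitute in the $d$ odd case) is the main obstacle; the reflection and the pointwise rotation are routine.
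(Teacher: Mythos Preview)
Your reflection step and boundary analysis are correct, and the pointwise $\pi$-rotation is a valid local construction. The gap is the global existence of the homotopy of $r$-frames on $M_i$, which does not follow from the connectivity of $V_{d,r}$ and in fact fails in general.

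Here is a concrete obstruction. Take $d = 4$, $r = 1$ (so $r < d/2$), and let $W = D^4$ equipped with any nowhere-zero vector field, viewed as a morphism $\emptyset \to S^3$. Your construction would produce $\overline{W}$ as a cylinder $S^3 \times [0,1]$ glued to the reflected disk, hence $\overline{W} \cong D^4$. But then the composition $W \circ \overline{W} \cong S^4$ would carry a nowhere-zero vector field, contradicting $\chi(S^4) = 2$. Thus the required homotopy from $(u,c)$ to $(u,-c)$ on $S^3$ simply does not exist, and no preliminary homotopy of the frame helps, since reflecting that homotopy shows the problem is invariant. The same argument with $W = D^d$ blocks your plan whenever $S^d$ does not admit $r$ independent vector fields; for instance $d = 5$, $r = 2$, where both hypotheses $d$ odd and $r < d/2$ hold, so your $d$-odd sketch is obstructed as well. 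Note that $\dim M_i = d-1 \geq d-r$, which is precisely the range where obstructions in $H^{*}(M_i;\pi_{*}(V_{d,r}))$ appear.

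The idea you are missing is that $\overline{W}$ cannot in general be taken diffeomorphic to $W$; one must modify the topology. The paper's proof extends a unit vector field $V$ agreeing with the normal on $\partial W$ over only the $(d-1)$-skeleton of $W$, and reflects the given $r$-frame in $V$ there. This corrects the boundary data but leaves an obstruction in each top cell $D$. That obstruction is then killed by taking a connected sum inside $D$ with a torus $T^d$ (when $d$ is odd, or when $d$ is even and the local degree of $V|_{\partial D}$ is $+1$) or with a product of spheres $S^i \times S^j$ (when $d$ is even and the degree is $-1$; this step is where the hypothesis $r < d/2$ is actually used). These connected sums are essential, not a convenience.
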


\begin{proof}
Suppose $(W,a) \in \Cob_d^r(M_0,M_1)$ is given. That is, $W\subseteq (-1,1)^n \times \R$ is a $d$-dimensional manifold with a section $v: W \to V_r(TW)$. The reflection $t \mapsto a-t$ in the $(n+1)$th coordinate takes $W$ to a morphism in $\Cob_d(M_1,M_0)$. In the oriented case, the orientation must be reversed. However, it does not define an element of $\Cob_d^r(M_1,M_0)$ yet, since the vector fields on $M_0$ and $M_1$ have been reflected in the normal direction. They must be reflected once more to get the correct vector fields on the objects. We need a way to extend these reflected  vector fields to $W_0=W\cap (-1,1)^n \times [0,a]$. 

For this, choose a normal vector field on $ \partial W_0$ and extend this to a unit vector field
\begin{equation*}
V : W^{(d-1)}_0 \to TW_{\mid W_0^{(d-1)}}
\end{equation*}
on the $(d-1)$-skeleton $W_0^{(d-1)}$. This is always possible by standard obstruction theory, see \cite{steenrod}.
This defines a map
\begin{equation*}
\sigma_V: W^{(d-1)}_0 \to O(TW)_{\mid W^{(d-1)}_0}.
\end{equation*}
Here $O(TW)$ is the bundle over $W$ with fiber over $x$ the orthogonal group $O(T_xW)$, and $\sigma_V(x)$ is defined to be the reflection of $T_xW$ that takes $V(x)$ to $-V(x)$ and leaves $V(x)^\perp$ fixed.

With this definition, $\sigma_V$ acts on the given vector fields by multiplication
\begin{equation*}
\sigma_V(x)\cdot  v(x) = w(x): W^{(d-1)}_0 \to V_r(TW_{\mid W^{(d-1)}_0})
\end{equation*}
for all $x\in W^{(d-1)}_0$. On $\partial W_0$, $\sigma_V$ is the reflection of the normal direction, so $w$ is an extension of the reflected vector fields on $\partial W_0$ to the $(d-1)$-skeleton.

It remains to extend $w$ over each $d$-cell $D\subseteq W_0$. This may not be possible. The idea is to take the connected sum of $W_0$ and a suitable manifold in the interior of $D$ such that $w$ extends to the glued in manifold. 

Choose a trivialization $TW_{\mid D} \cong D\times \R^d$. We may assume that $v_{\mid \partial D}:S^{d-1} \to V_{d,r}$ is constant equal to $v_0$ since it extends to $D$. Then $w$ is given on $\partial D$ by
\begin{equation*}
w_{\mid \partial D}: S^{d-1} \xrightarrow{\sigma_V} O(d) \xrightarrow{h} V_{d,r},
\end{equation*}
where $h$ is evaluation on $v_0$.

In the trivialization, $V_{\mid \partial D}$ is a map $V_{\mid \partial D} : S^{d-1}  \to S^{d-1}$. Let $\rho_d : S^{d-1} \to O(d)$ be the map that takes $x\in S^{d-1}$ to the reflection of the line spanned by $x$. Then 
\begin{equation*}
\sigma_V = \rho_d \circ V_{\mid \partial D} \textrm{ and }
w_{\mid \partial D} = h \circ \rho_d \circ V_{\mid \partial D}.
\end{equation*}
By possibly dividing $D$ into smaller cells, we may assume that the degree of $V_{\mid \partial D}$ is either 0 or $\pm 1$.

If the degree of $V_{\mid \partial D}$ is zero, then $\sigma_V$ is homotopic to a constant map. Hence, so is $w_{\mid \partial D}$, and the vector fields extend to all of $D$.

If the degree is $+1$, then $V$ is homotopic to the identity map. This means that $\sigma_V$ is the reflection in the normal direction. Choose $r$ independent vector fields on the torus $T^d$. Cut out a disk $D'$. The vector fields on $\partial D'$ are now homotopic to $v_{\mid \partial (W\backslash D)}$, since both extend over a disk. Thus, after reflecting $v_{\mid \partial D}$ in the normal direction, we can form the connected sum of $W$ and $T^d$ in the interior of $D$ with $r$ independent vector fields extending $w_{\mid \partial(W\backslash D)}$ over $T^d\backslash D'$.

If the degree of $V_{\mid \partial D}$ is $-1$ and $d$ is odd, $V$ is homotopic to minus the identity. Since $\sigma_V=\sigma_{-V}$, we may do as in the degree $+1$ case.

We are now left with the case where $d=2k$ is even and the degree is $-1$. In this case we would like to take the connected sum with a product of two spheres, rather than a torus. 

First look at what happens to the vector fields when they are reflected in a map of degree $-1$. Consider the diagram where the middle vertical sequence is exact and $p$ is the map that forgets the first $r-1$ vectors: 
\begin{equation}\label{Odia}
\begin{vcenter}{\xymatrix{{\pi_{d-1}(S^{d-1})}\ar[d]^{V_{\partial D}}&{\pi_{d}(S^{d})}\ar[d]^{\delta}\ar[dr]_{\delta_d}\ar[drr]^{\cdot 2}&{}&{}\\
{\pi_{d-1}(S^{d-1})}\ar[r]^-{\rho_d}\ar[d]&{\pi_{d-1}(O(d))}\ar[r]^{h}\ar[d]&{\pi_{d-1}(V_{d,r})}\ar[r]^-{p}&{\pi_{d-1}(S^{d-1})}\\
{\pi_{d-1}(S^{d})=0}\ar[r]^-{\rho_{d+1}}&{\pi_{d-1}(O(d+1)).}&{}&{}
}}\end{vcenter}
\end{equation}
It follows that $\rho_d$ maps into the image of $\delta$. The composition $p\circ h \circ \rho_d$ maps $x\in S^{d-1}$ to the reflection of a fixed vector in the $x$-direction. This is the obstruction to a zero-free vector field on $S^{d}$, hence the degree is the Euler characteristic $\chi(S^d)=2$, see \cite{steenrod}. Thus $\rho_d([1])=\delta([1])$, and therefore 
\begin{equation*}
[w_{\mid \partial D}] = h \circ \rho_d ([-1]) = \delta_d([-1]). 
\end{equation*}
Here and in the following, $[m]\in \pi_l(S^l)$ denotes the class of degree $m$ maps. 

Consider
\begin{alignat*}{2}
&S^k\times S^k &\quad &\textrm{ if $k$ is even,}\\
&S^{k-1}\times S^{k+1} &\quad &\textrm{ if $k$ is odd.}
\end{alignat*}  
For simplicity, we write this product as $S^i \times S^j$ in the following. 
Choose $r$ vector fields with one singularity on each sphere. This is possible because $r<k$ by assumption. 
We may assume that these vector fields are given outside small open disks $D^i$ and $D^j$ by
\begin{equation*}
\begin{split}
u_1 &: S^i\backslash D^i \to V_r(TS^i)\\
u_2 &: S^j \backslash D^j \to V_r(TS^j),
\end{split}
\end{equation*} 
respectively. 
This defines $r$ vector fields on $S^i\times S^j$ with one singularity inside $D^i \times D^j$ via the formula
\begin{equation*}
u(x,y) = \frac{u_1(x)+u_2(y)}{\sqrt{2}}
\end{equation*}
on $S^i\backslash D^i\times S^j\backslash D^j$ and
\begin{equation*}
u(x,y) = \frac{1}{\sqrt{2}}\left({|x|u_1\left(\frac{x}{|x|}\right)+|y|u_2\left(\frac{y}{|y|}\right)}\right)
\end{equation*}
on $D^{i}\times S^j \cup S^{i}\times D^j$. On the boundary of $D^{i}\times D^j$, this is the join 
\begin{equation*}
u_{1\mid S^{i-1}} \star u_{2\mid S^{j-1}} : S^{i-1} \star S^{j-1} \to V_{d,r}. 
\end{equation*}
This map represents the obstruction to $r$ independent vector fields on $S^i\times S^j$. 

Now look at the diagram
\begin{equation*}
\xymatrix{{\pi_i(S^i)\times \pi_j(S^j)}\ar[d]^{\delta_i\times \delta_j}&{\pi_d(S^d)}\ar[d]^{\delta_d}\ar[dr]^{\cdot 2}&{}\\
{\pi_{i-1}(V_{i,r})\times\pi_{j-1}(V_{j,r})}\ar[r]^-{\star}\ar[d]^{\eta_i \times \eta_j }&{\pi_{d-1}(V_{d,r})}\ar[r]^-{p}\ar[d]^{\eta_d}&{\pi_{d-1}(S^{d-1})}\\
{\pi_{i-1}(V_{i+1,r+1})\times\pi_{j-1}(V_{j+1,r+1})}&{\pi_{d-1}(V_{d+1,r+1}).}&{}
}
\end{equation*}
For $l=i,j,d$, the class $\delta_l([1])$ is the obstruction to $r$ independent vector fields on $S^l$. Thus the homotopy class of $u_{1\mid S^{i-1}} \star u_{2 \mid S^{j-1}}$ is $\delta_i([1])\star \delta_j([1])$. By \cite{james1}, formula (2.12 b), 
\begin{equation*}
\eta_d(\delta_i([1])\star \delta_j([1])) = \eta_i(\delta_i([1])) \star \delta_j'([1]).
\end{equation*}
Here $\delta_j':\pi_j(S^j) \to \pi_{j-1}(V_{j,r+1})$ is the boundary map. This is well-defined because $r < j $. 
But $\eta_i \circ \delta_i = 0$, so $\delta_i([1])\star \delta_j([1])$ is in the image of $\delta_d$. Furthermore,
\begin{equation*}
p(\delta_i([1])\star \delta_j([1]))=[4]
\end{equation*}
since it is the obstruction to a single zero-free vector field on $S^i\times S^j$, which is $\chi(S^i\times S^j)=4$. Thus $\delta_i([1])\star \delta_j([1]) =\delta_d([2])$.

Summarizing the above, there are independent vector fields on $S^i\times S^j \backslash D^i\times D^j$ and $\partial(W\backslash \indre(D))$ given on the boundaries of the removed disks by $\delta_d([2])$ and $\delta_d([-1])$, respectively. We can take the connected sum if the vector fields agree after reflecting the ones on $\partial (D^i\times D^j)$. That is, it remains to show $\rho_d \cdot \delta_d([2]) = \delta_d([-1])$.

Note that
\begin{equation}\label{degref}
p(\rho_d\cdot \delta_d([m])) = \rho_d\cdot p(\delta_d([m])) =\rho_d\cdot [2m] = [2-2m].
\end{equation} 
The last equality follows because a degree $2m$ map $S^{d-1} \to S^{d-1}$ defines a vector field on $S^d$ with two singularities, one of degree $2m$ and one of  degree $\rho_d\cdot [2m]$. The sum of these must be $\chi(S^d)=2$ by obstruction theory.

Also note that 
\begin{equation*}
\eta_d(\rho_d\cdot \delta_d([2])) = \rho_d'\cdot (\eta_d \circ \delta_d([2]))=0
\end{equation*}
 where $\rho_d'$ is the composition of $\rho_d$ with $O(d) \to O(d+1)$ and hence trivial by \eqref{Odia}. 
Thus $\rho_d\cdot \delta_d([2])$ is in the image of $\delta_d$. According to \eqref{degref},
\begin{equation*}
p(\rho_d\cdot\delta_d([2])) = p(\delta_d([-1])) =[-2]. 
\end{equation*}
But $p\circ \delta_d$ is injective, so $\rho_d\cdot \delta_d([2])=\delta_d([-1])$ as claimed.
\end{proof}

This proof was inspired by Proposition 4.23 in \cite{monoids}.

\begin{rem}
In the case where $S^d$ allows $r$ independent vector fields, see \cite{adams1}, it is possible to choose $\overline{W}$ diffeomorphic to $W$. This is because we may glue in disks, rather than tori, when constructing $\overline{W}$.  

If $S^d$ does not allow $r$ independent vector fields, the disk $D^d$ is an example of a morphism such that $\overline{D}^d$ cannot be chosen diffeomorphic to $D^d$. Otherwise they would glue together to a sphere with $r$ independent vector fields.

The above proof would work more generally for any $\theta $-structure satisfying that $S^{d-1}$ with any $\theta$-structure bounds a $\theta $-manifold. For $d$ odd, it would also suffice that $S^d$ allows a $\theta$-structure. 
\end{rem}

\begin{ex}
Consider the case $d=2$ and $r=1$. A surface of genus $g>1$ allows a vector field with only one singularity. Cut out a disk containing the singularity. This defines a morphism from $\emptyset $ to $S^1$. If there were a morphism in the opposite direction, they would glue together to a closed surface of genus at least $g>1$ with a zero-free vector field, which is impossible.

This shows that the condition  $r<\frac{d}{2}$ is not always redundant. We do not know whether it is best possible. 
\end{ex}

\begin{thm}\label{antalvf}
There are weak homotopy equivalences
\begin{equation*}
B\Cob _{d+k}^{r+k} \to \Omega^k B\Cob_d^r \to \Omega^{\infty+d+k-1}MT(d-r).
\end{equation*}
In the case $k=1$, assume $M\in \Ob(\Cob _{d+1}^{r+1})$ with $r$ tangent vector fields and the $(r+1)$th vector field equal to the positively directed normal $\eps$. Then the component of $M$ in $\pi_0(B\Cob _{d+1}^{r+1})$ is mapped to the morphism path in $\pi_1(B\Cob_d^r)$ corresponding to $M$, now considered as a morphism in $\Cob_d^r(\emptyset,\emptyset)$ with the $r$ tangent vector fields.  Both correspond to the Pontryagin--Thom element in $\pi_d(MT(d-r))$.
\end{thm}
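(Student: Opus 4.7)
The plan has two parts: first establishing the weak equivalences, then proving the geometric identification in the case $k=1$. The weak equivalences are essentially already in hand, since at the level of homotopy groups the analogous identifications were made at the end of Section~\ref{embcob}. Concretely, I would apply Theorem~\ref{RW} to both $\Cob_{d+k}^{r+k}$ and $\Cob_d^r$, obtaining $B\Cob_{d+k}^{r+k} \simeq \Omega^{\infty+d+k-1} i_{r+k}^*MT(d+k)$ and $B\Cob_d^r \simeq \Omega^{\infty+d-1} i_r^*MT(d)$. The fiber inclusion argument from Section~\ref{embcob} identifies both $i_{r+k}^*MT(d+k)$ and $i_r^*MT(d)$ with $MT(d-r)$ up to weak equivalence. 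Writing $\Omega^{\infty+d+k-1} = \Omega^k\Omega^{\infty+d-1}$ and composing these GMTW equivalences with $\Omega^k$ of the second yields both maps in the statement and shows they are weak equivalences.

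For the $k=1$ geometric statement, I would trace the Pontryagin--Thom content of the scanning map $\alpha$ of \cite{monoids}. An object $M \in \Ob(\Cob_{d+1}^{r+1})$ is a closed $d$-manifold carrying $r+1$ orthonormal sections of $TM \oplus \R$. Via the fiber equivalence, the corresponding class in $\pi_0 B\Cob_{d+1}^{r+1} \cong \pi_d MT(d-r)$ is represented by the classifying map $M \to B(d-r)$ of the rank $d-r$ orthogonal complement of the trivial subbundle $\R^{r+1} \subset TM \oplus \R$ spanned by the frame. The hypothesis that the last section is the normal $\eps$ ensures that the remaining $r$ sections lie in $TM$ itself, so this complement coincides with the orthogonal complement in $TM$ of the $r$ tangent sections. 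That is exactly the classifying map attached to $M$ regarded as a morphism $\emptyset \to \emptyset$ in $\Cob_d^r$. Hence both constructions give the same element of $\pi_d MT(d-r)$.

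It remains to verify that the delooping $B\Cob_{d+1}^{r+1} \to \Omega B\Cob_d^r$ constructed in the first step sends the point $[M]$ to the loop $[\gamma_M]$, and not merely to some homotopic loop obtained through a circuitous zigzag. I would carry this out by a direct comparison of scanning maps: the $0$-simplex associated to the object $M$ in $N_\bullet(\Cob_{d+1}^{r+1})$ and the $1$-simplex $\{M\} \times \Delta^1 \subset N_1(\Cob_d^r) \times \Delta^1$ defining $\gamma_M$ are built from the same geometric data, the only difference being that the extra ambient $\R$-coordinate in $\Cob_{d+1}^{r+1}$ plays the role of the loop parameter in $\Omega B\Cob_d^r$, and the normal section $\eps$ is precisely what records this direction. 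The main obstacle is carrying out this bookkeeping between the two scanning map definitions in \cite{monoids} cleanly; once that is done, the identification is transparent.
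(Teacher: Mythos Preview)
Your approach is essentially the same as the paper's, with one organizational difference worth noting. You obtain the weak equivalences by applying Theorem~\ref{RW} separately to $\Cob_{d+k}^{r+k}$ and $\Cob_d^r$ and then matching targets via the fiber-inclusion equivalence $MT(d-r)\simeq i_r^*MT(d)$; the paper instead writes down an explicit geometric map first --- sending $M\subseteq\R^n$ with its $r$-frame to $M\times\R^k\subseteq\R^{n+k}$ with the $k$ extra coordinate directions appended to the frame --- and then checks via commutative diagrams of $\psi$-spaces and Thom spaces that this map is compatible with the GMTW equivalences on both sides. Your third paragraph, where you propose a ``direct comparison of scanning maps'' tracking the extra $\R$-coordinate as the loop parameter and $\eps$ as the section recording it, is exactly this explicit map in the case $k=1$, and the bookkeeping you anticipate is precisely what the paper's two commutative squares carry out. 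So the content is the same; the paper just front-loads the construction of the direct map, which makes the $\pi_0\to\pi_1$ identification immediate rather than requiring a separate verification after the fact.
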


\begin{proof}
We have the following commutative diagram for each $n$
\begin{equation}\label{psi}
\vcenter{\xymatrix{{\Omega^k B\Cob_{d,n}^r}\ar[r]&{\Omega^k \psi_d^r(n,1)}\ar[d]\ar[r]&{\Omega^{n+k-1} \psi_d^r(n,n)}\ar[d]\\
{B\Cob _{d+k,n+k}^{r+k}}\ar[r]&{\Omega^k \psi_{d+k}^{r+k}(n+k,k+1)}\ar[r]&{\Omega^{n+k-1} \psi_{d+k}^{r+k}(n+k,n+k).}
}}
\end{equation}
The vertical maps take a manifold $M \subseteq \R^{n}$ with $r$ vector fields to $M\times \R^k \subseteq \R^{n}\times \R^k$ with the $r$ vector fields from $M$ together with the $k$ standard vector fields in the $\R^k$ direction. The horizontal maps are the homotopy equivalences from \cite{monoids}.

The diagram
\begin{equation}\label{diagphi}
\vcenter{\xymatrix{{\psi_d^r(n,n)}\ar[d]\ar[r]&{\Th(i_r^*U_{d,n-d}^\perp \to V_r(U_{d,n-d}))}\ar[d]\\
{\psi_{d+k}^{r+k}(n+k,n+k)}\ar[r]&{\Th(i_{r+k}^*U_{d+k,n-d}^\perp \to V_{r+k}(U_{d+k,n-d}))}
}}
\end{equation}
also commutes. The horizontal maps take a manifold $M$ to $-p$ in the fiber over $T_pM$ with the vector fields evaluated at this point, where $p$ is the point on $M$ closest to the identity (whenever this is defined).  Hence commutativity follows. Some details have been omitted, see \cite{monoids} for the precise definition of the horizontal maps.

If we let $n$ tend to infinity, the right vertical map in \eqref{psi} is a homotopy equivalence, since the diagram
\begin{equation*}
\xymatrix{
{\Th(U_{d-r,n-d}^\perp \to G(d-r,n-d))}\ar[r]\ar[dr]&{\Th(i_r^*U_{d,n-d}^\perp \to V_r(U_{d,n-d}))}\ar[d]\\
{}&{\Th(i^*_{r+k}U_{d+k,n-d}^\perp \to V_{r+k}(U_{d+k,n-d}))}
}
\end{equation*} 
commutes and the two maps to the left are homotopy equivalences of spectra.

Now let $k=1$ and let $M\subseteq (-1,1)^{n}$  be as in the theorem.  This corresponds to the manifold $M\times \R$ in $\psi_{d+1}^{r+1}(n+1,1)$ with the induced vector fields. The first lower horizontal map in \eqref{psi} takes this to a loop $\R^+ \to \psi_{d+1}^{r+1}(n+1,2)$ given by $t \mapsto M\times \R -(0,\dots , 0,t,0)$. But this is the image of the loop $ \R^+ \to \psi_{d}^{r}(n,1)$ given by $t \mapsto M-(0,\dots , 0 ,t)$ under the vertical map. This is again homotopic to the image of the morphism path for $M$ in $B\Cob_{d,n}^r$ under the map $\Omega B\Cob_{d,n}^r \to \Omega \psi_{d}^{r}(n,1)$, as one may see by checking the definitions. 

%
%
Going to the right in the upper part of \eqref{psi} shows that $M$ corresponds to the map $(\R^n)^+ \to \psi(n,n)$ given by $t\mapsto M-t$. Going through the definition of the horizontal maps in \eqref{diagphi}, one realizes that this corresponds to the Pontryagin--Thom maps.
\end{proof}

\begin{thm}\label{inverse}
For $r\geq 0$, all morphisms in $\Cob_d^r(\emptyset , \emptyset )$ have inverses in the sense of Theorem \ref{morsti} (i).
\end{thm}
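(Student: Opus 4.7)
The plan is to construct $W^-$ explicitly as the reflection of $W$ in the ambient time direction, and then use Theorem~\ref{antalvf} (equivalently, the 2-simplex relation together with the abelian-group structure of $\pi_1$) to conclude that $\gamma_{W\sqcup W^-}$ is null-homotopic.

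Given $W\in\Cob_d^r(\emptyset,\emptyset)$ embedded in $(-1,1)^{n+d-1}\times[0,a]$ with orthonormal tangent fields $v_1,\ldots,v_r$, I would take $W^-$ to be $W$ reflected in the last ambient coordinate by $\sigma\colon(x,t)\mapsto(x,-t)$ and translated into a disjoint time slab, with pushed-forward tangent fields $\sigma_*v_i$. Since $\sigma$ is an isometry, these are orthonormal tangent fields on $W^-$, so $W^-\in\Cob_d^r(\emptyset,\emptyset)$. The 2-simplex $\{(W,W^-)\}\times\Delta^2$ gives a homotopy $\gamma_W\cdot\gamma_{W^-}\simeq\gamma_{W\circ W^-}=\gamma_{W\sqcup W^-}$, and since $\pi_1(B\Cob_d^r)$ is abelian (being the fundamental group of an infinite loop space, by Theorem~\ref{RW}), this translates into the relation $[\gamma_{W\sqcup W^-}]=[\gamma_W]+[\gamma_{W^-}]$ in $\pi_1(B\Cob_d^r)$.

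It remains to see that $[\gamma_{W^-}]=-[\gamma_W]$ in $\pi_1(B\Cob_d^r)\cong\pi_d(MT(d-r))$. One route is to unwrap the Madsen--Tillmann map $\alpha_{d,r}$ of Theorem~\ref{RW}: the class $[\gamma_W]$ is represented by a Pontryagin--Thom collapse from a large sphere into $MT(d-r)$, and composing this collapse with the degree~$-1$ ambient time-reflection $\sigma$ multiplies the class by $-1$. A more geometric route is to invoke Theorem~\ref{antalvf} and produce an explicit null-cobordism $V\in\Cob_{d+1}^{r+1}(\emptyset,W\sqcup W^-)$: take $V=W\times[0,1]$ embedded as a U-cap along a curve $\beta$ in two ambient coordinates, with both endpoints of $\beta$ at the outgoing time value, and equip it with $r+1$ orthonormal tangent fields $(u_1(s),v_2,\ldots,v_r,e(s))$, where $u_1(s)$ and $e(s)$ arise from rotating the pair $(v_1,\tau)$ (with $\tau$ the unit tangent to $\beta$) through an angle $\pi$ as $s$ runs from $0$ to $1$. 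A direct check gives $e(0)=e(1)=+\epsilon$ on both boundary components (the outward normal) while $u_1$ interpolates from $v_1$ to $-v_1$; matching this to the reflected framing $\sigma_*v_i$ on $W^-$ exhibits $V$ as the required null-cobordism, and Theorem~\ref{antalvf} then produces the null-homotopy.

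The main obstacle I expect is the boundary bookkeeping: verifying that after the $\pi$-rotation and the U-bend the outward-normal conventions agree on both boundary components, that the $r+1$ vector fields on $V$ are genuinely orthonormal and tangent throughout, and that the cap embedding is constant enough near the endpoints so that $V$ satisfies the cylindricality requirement of a morphism in $\Cob_{d+1}^{r+1}$. Once these details are in place either of the two routes above yields $[\gamma_{W\sqcup W^-}]=0$, and hence $\gamma_{W\sqcup W^-}$ is null-homotopic, establishing condition~(i) of Theorem~\ref{morsti} for every $r\geq 0$.
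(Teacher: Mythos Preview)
Your proposed inverse $W^-=\sigma(W)$ does not work: the time-reflection with the pushed-forward frame is framed-isotopic to $W$ itself. Embed $W\subset\R^{n+d-1}\times\R$ into $\R^{n+d-1}\times\R\times\R$ by adding a new spatial coordinate and rotate by angle $\pi s$ in the last two coordinates; at $s=1$ this carries $(y,t,0)$ to $(y,-t,0)$ and each $v_i$ to $\sigma_*v_i$, exhibiting the isotopy. Hence $[\gamma_{\sigma(W)}]=[\gamma_W]$ in $\pi_1(B\Cob_d^r)$, and $\gamma_{W\sqcup\sigma(W)}$ represents $2[\gamma_W]$, not zero. A concrete failure is $d=2$, $r=0$, $W=S^2$: the Euler characteristic is an invariant of the Reinhart cobordism group $\pi_0(B\Cob_3^1)$, and $\chi(S^2\sqcup S^2)=4\neq 0$. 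Your Route~1 hides exactly this sign: the ambient reflection acts by degree $-1$ on the source sphere, but it also induces a degree $-1$ self-map of the target Thom spectrum (stably, a reflection in one ambient coordinate becomes a reflection in one normal coordinate of $U_{d,n}^\perp$), and the two signs cancel.

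Your Route~2 is essentially the paper's argument for $r\geq 1$, but note that it does not produce $\sigma(W)$: the U-cap $W\times[0,1]$ with the rotated pair has, at $s=1$, the manifold $W$ (merely translated, not reflected) carrying the tangent frame $(-v_1,v_2,\ldots,v_r)$. That is the correct inverse, and it is not $(\sigma(W),\sigma_*v_i)$ in general. More importantly, the rotation trick needs a nowhere-zero tangent field on $W$ to pair with $\eps$, so it covers $r\geq 1$ and, by choosing such a field arbitrarily, $d$ odd; it says nothing when $r=0$ and $d$ is even. There the paper proceeds differently: by Reinhart's theorem one must produce a $W^-$ which is a cobordism inverse of $W$ and satisfies $\chi(W^-)=-\chi(W)$, obtained from a copy of $W$ (with reversed orientation in the oriented case) by adjoining enough spheres or connected sums of tori. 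In particular $W^-$ need not be diffeomorphic to $W$, so no reflection of $W$ can serve.
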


\begin{proof}
Let $W$ be a closed $d$-dimensional manifold with $r$ orthonormal vector fields 
\begin{equation*}
v_1,\dots ,v_r : W \to TW. 
\end{equation*}

Assume $d$ is odd or $r\geq 1$. We consider $W$ as an object of $\Cob_{d+1}^{r+1}$ with the positive normal vector  field $\eps$ as the $(r+1)$th vector field. Then there are $r+1$ vector fields on $W\times \R$, given on $W\times [0,1]$ as follows:
Choose a zero-free vector field $v:W \to TW$. If $r\geq 1$, we simply choose this to be $v_r$. If $d$ is odd and $r=0$, we may choose $v$ arbitrary.
Define $r+1$ orthonormal vector fields $w_1,\dots , w_{r+1}$ on $W\times [0,1]$ by
\begin{align*}
w_i(x,t) =& v_i(x) \\
w_r(x,t) =& \cos (\pi t)v(x) + \sin(\pi t)\eps (x)\\
w_{r+1}(x,t) =& -\sin(\pi t)v(x) + \cos(\pi t)\eps (x).
\end{align*} 
Extend these trivially to $W\times \R$. Embed $W\times \R$ as a cobordism from $W \times \{0,1\}$ to $\emptyset$ in $\Cob_{d+1}^{r+1}$. Thus $W \times \{0,1\}$ belongs to the base point component of $B\Cob_{d+1}^{r+1}$. 
Let $W^- $ be $ W\times \{1\}$ with the induced vector fields and, in the oriented case, orientation.
 
Under the isomorphism from Theorem \ref{antalvf} 
\begin{equation*}
\pi_1(B\Cob_{d}^{r})\to \pi_0(B\Cob_{d+1}^{r+1}), 
\end{equation*}
$W \times \{0,1\}$ lifts to $\gamma_{W\circ W^-}$, so this must be null-homotopic.

In the remaining case where $d$ is even and $r=0$, we may still view $W$ as an object in $\Cob_{d+1}^{1}$ with vector field $\eps$. As before, we seek another manifold such that the disjoint union with $W$ bounds a manifold with a zero-free vector field extending the inward normal. By \cite{reinhart} it is enough to find a manifold $W^-$ which is a cobordism inverse to $W$ and has Euler characteristic 
\begin{equation}\label{chi}
\chi(W^-) = -\chi(W).
\end{equation} 

Let $W'$ be a copy of $W$. In the oriented category, give it the opposite orientation. Then $W'$ is a cobordism inverse of $W$. 
Taking the disjoint union with a sphere increases the Euler characteristic by 2, and taking disjoint union with a connected sum of two tori decreases the Euler characteristic by 2. Thus, defining $W^-$ to be the disjoint union of $W'$ and a suitable bounding manifold, $\eqref{chi}$ is satisfied.
\end{proof}

\begin{cor}\label{path}
For $d$ odd or $r<\frac{d}{2}$, any class in $\pi_1(B\Cob_d^r)$ may be represented by a morphism path.
\end{cor}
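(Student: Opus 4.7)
The plan is to recognize this as an immediate application of Theorem \ref{morsti} to the category $\Cob_d^r$, with the two hypotheses supplied by the two preceding theorems of this section. Specifically, I would observe that Theorem \ref{vende} provides exactly condition (ii) of Theorem \ref{morsti}: under the assumption that $d$ is odd or $r < \frac{d}{2}$, every morphism $W \in \Cob_d^r(M_0,M_1)$ admits an opposite $\overline{W} \in \Cob_d^r(M_1,M_0)$. Meanwhile, Theorem \ref{inverse} gives condition (i): every closed $W \in \Cob_d^r(\emptyset,\emptyset)$ has an inverse $W^-$ with $W \circ W^-$ null-homotopic, and this holds for all $r \geq 0$ with no further restriction on $d$ or $r$.

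With both hypotheses verified precisely under the corollary's assumption on $(d,r)$, Theorem \ref{morsti} applies directly and yields that any class in $\pi_1(B\Cob_d^r)$ is represented by a morphism path $\gamma_W$ for some closed $W \in \Cob_d^r(\emptyset,\emptyset)$. There is no obstacle here; all the real work was done in Theorems \ref{vende} and \ref{inverse}, and the present statement is essentially a bookkeeping corollary that records the conclusion in the form that will be needed for the geometric interpretation of $\pi_1(B\Cob_d^r)$ in the next section.
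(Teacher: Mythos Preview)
Your proposal is correct and matches the paper's own proof exactly: the paper simply states that the corollary follows from Theorem~\ref{morsti}, \ref{vende}, and \ref{inverse}, which is precisely the combination you identified.
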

\begin{proof}
This follows from Theorem \ref{morsti}, \ref{vende} and \ref{inverse}.
\end{proof}

\begin{defi}
Let $M_0, M_1 \in \Ob(\Cob_d^r)$. We say that $M_0$ is vector field cobordant to $M_1$ if $\Cob_d^r(M_0,M_1)$ is non-empty.
\end{defi}
We are now ready to prove Theorem \ref{vfcob.}.
%
%

\begin{proof}[Proof of Theorem \ref{vfcob.}]
For the equivalence relation, symmetry follows from Theorem~\ref{vende} and transitivity is given by composing morphisms.
 
By Theorem \ref{antalvf}, $ \pi_r(B\Cob_{d-r}) \cong \pi_0(B\Cob_d^r) $.
If two manifolds are vector field cobordant, they obviously belong to the same path component of $B\Cob_d^r$. Conversely, if $M_0$ and $M_1$ belong to the same path component, there is a zigzag of morphisms relating them by Theorem \ref{morph}. Thus they are vector field cobordant. 

By Theorem \ref{path}, any element of $\pi_r(B\Cob_{d-r})\cong \pi_1(B\Cob_{d-1}^{r-1})$ is represented by a morphism for $d-1$ odd or $r < \frac{d}{2}$. This corresponds to an object of $\Cob_d^r$ with the $r$th vector field equal to $\eps$ by Theorem \ref{antalvf}.
\end{proof}

In particular, $\pi_d(MT(d)) \cong \pi_0(B\Cob_{d+1}^1)$ is the group of Reinhart cobordism classes of $d$-dimensional manifolds. 
A Reinhart cobordism from $M^d$ to $N^d$ is a cobordism with a zero-free vector field which is inward normal at $M$ and outward normal at $N$. The equivalence classes are determined in \cite{reinhart}. 

\begin{cor}\label{corres}
Let $d$ odd or $r < \frac{d}{2}$. The image of $\pi_d(MT(d-r)) \to \pi_d(MT(d))$ is the group of Reinhart cobordism classes containing a manifold that allows $r$ independent tangent vector fields.
\end{cor}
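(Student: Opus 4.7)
The plan is to trace elements through the identifications already set up. Via the weak equivalence $MT(d-r)\to i_r^*MT(d)$ recalled in Section~\ref{embcob} and the commutative diagram \eqref{forget}, the map $\pi_d(MT(d-r))\to \pi_d(MT(d))$ is naturally identified with the map $\pi_1(B\Cob_d^r)\to \pi_1(B\Cob_d)$ induced by the forgetful functor $F$ which drops the $r$ tangential vector fields. Since the target $\pi_d(MT(d))\cong \pi_0(B\Cob_{d+1}^1)$ is the Reinhart cobordism group, as observed just before this corollary, it suffices to compute the image of $BF$ on $\pi_1$.

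For one inclusion, I would start from an arbitrary class $\alpha\in \pi_d(MT(d-r))\cong \pi_1(B\Cob_d^r)$ and apply Corollary \ref{path}, whose hypothesis is identical to that of the present corollary, to write $\alpha =[\gamma_W]$ for some closed $d$-manifold $W$ equipped with $r$ orthonormal tangent vector fields. Then $BF[\gamma_W]=[\gamma_W]\in \pi_1(B\Cob_d)$, and Theorem~\ref{antalvf} with $k=1$, $r=0$ identifies this with the class of $(W,\eps)\in \pi_0(B\Cob_{d+1}^1)$, namely the Reinhart class of $W$. Since $W$ carries $r$ independent tangent vector fields by construction, the image is contained in the claimed subgroup.

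For the reverse inclusion, given any Reinhart class $[N]$ represented by a manifold $M$ admitting $r$ orthonormal tangent vector fields $v_1,\dots ,v_r$, the tuple $(M,v_1,\dots ,v_r)$ is a morphism in $\Cob_d^r(\emptyset ,\emptyset )$, and the associated morphism path gives a class $[\gamma_M]\in \pi_1(B\Cob_d^r)\cong \pi_d(MT(d-r))$. By the same identifications, $BF[\gamma_M]\in \pi_1(B\Cob_d)$ corresponds to the Reinhart class of $M$, which is $[N]$, so $[N]$ is in the image.

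No real technical obstacle arises: the argument is bookkeeping with the identifications from Theorem~\ref{antalvf} together with the representability statement Corollary~\ref{path}. The only subtle point is checking, via the $k=1$, $r=0$ case of Theorem~\ref{antalvf}, that the Reinhart class of a closed $d$-manifold $W$ corresponds under $\pi_0(B\Cob_{d+1}^1)\cong \pi_1(B\Cob_d)$ to the morphism path $\gamma_W$; this ensures that forgetting the $r$ tangent vector fields on the $\Cob_d^r$ side really produces the Reinhart class of $W$ (and not some other section) on the $\Cob_{d+1}^1$ side.
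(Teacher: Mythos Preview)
Your argument is correct and follows essentially the same route as the paper: both use Corollary~\ref{path} to represent a class in $\pi_1(B\Cob_d^r)$ by a morphism path $\gamma_W$ with $W$ carrying $r$ tangent fields, and then invoke the commutativity of diagram~\eqref{forget} (together with the identification of $\pi_d(MT(d))$ with the Reinhart group) to conclude. If anything, your write-up is more complete, since you spell out the easy reverse inclusion explicitly, whereas the paper leaves it implicit.
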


\begin{proof}
Let $\beta$ be a Reinhart cobordism class. If $\beta $ lifts to $\alpha \in \pi_{d}(MT(d-r))$, this is represented by a morphism loop in $\pi_1(B\Cob_d^r)$ by Corollary \ref{path}. This morphism is a closed $d$-dimensional manifold with $r$ independent tangent vector fields and it represents $\beta$ by \eqref{forget}. 
\end{proof}

A diagram similar to \eqref{forget} yields an interpretation of the maps
\begin{equation*}
\pi_{d-1}(MT(d-r-k))\to \pi_{d-1}(MT(d-r)):
\end{equation*}

\begin{cor}
Suppose $d$ is odd or $r<\frac{d}{2}$. Under $\pi_0(B\Cob_d^{r+k})\to \pi_0(B\Cob_d^{r})$, a component containing a manifold $M$ with $r$ orthonormal sections in $TM\oplus \R$ is in the image if and only if there is a cobordism with $r$ orthonormal vector fields from $M$ to some $M'$ such that the $r$ sections in $TM'\oplus \R$ extend to $r+k$ orthonormal sections.

If $d$ is even or $r<\frac{d-1}{2}$, the image of $\pi_0(B\Cob_d^{r+1})\to \pi_0(B\Cob_d^{r})$ is the subgroup of the vector field cobordism group containing all manifolds with $r$ orthonormal tangent vector fields.
\end{cor}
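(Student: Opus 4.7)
The plan is to deduce both parts as direct consequences of Theorem \ref{vfcob.} once the forgetful map $\phi : \pi_0(B\Cob_d^{r+k}) \to \pi_0(B\Cob_d^r)$ is unpacked on representatives. In both parts, the easy inclusion of the image is obtained by a direct lift, while the harder inclusion is read off from the characterization of $\pi_0(B\Cob_d^r)$ given by Theorem \ref{vfcob.}.

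For the first part, in the direction $(\Leftarrow)$ I would take the hypothesized $M'$ equipped with its extended $r+k$ orthonormal sections as an object of $\Cob_d^{r+k}$; this class lifts $[M']$, which equals $[M]$ in $\pi_0(B\Cob_d^r)$ because the given cobordism $W \in \Cob_d^r(M,M')$ furnishes a path between them. For the converse, given $[M] = \phi([N])$ with $N \in \Ob(\Cob_d^{r+k})$, I would regard $N$ as an object of $\Cob_d^r$ by retaining only its first $r$ sections, so $[M] = [N]$ in $\pi_0(B\Cob_d^r)$. The hypothesis ``$d$ odd or $r < d/2$'' is exactly what the first assertion of Theorem \ref{vfcob.} requires; under it, lying in the same $\pi_0(B\Cob_d^r)$ component is equivalent to being vector-field cobordant, so the sought $W \in \Cob_d^r(M,N)$ exists, with $N$ by construction admitting $r+k$ orthonormal sections.

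For the second part (the case $k=1$), the easy direction takes a representative with $r$ tangent vector fields and appends the positive normal $\eps$ as the $(r+1)$th section, producing an object of $\Cob_d^{r+1}$ that lifts $[M]$. For the converse, the plan is to invoke the second assertion of Theorem \ref{vfcob.} applied to $\pi_0(B\Cob_d^{r+1}) \cong \pi_{r+1}(B\Cob_{d-r-1})$: any lift of $[M]$ is represented by a manifold $N'$ carrying $r$ independent sections of $TN'$ together with $\eps$ as its $(r+1)$th section; forgetting $\eps$ then exhibits $N'$ as a representative of $[M]$ whose $r$ orthonormal sections are genuinely tangent.

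The one step I expect to require careful attention, and the only real obstacle, is matching hypotheses. For the first part this is literal. For the second, the second assertion of Theorem \ref{vfcob.} applied at index $(r+1, d)$ requires ``$d$ even or $r+1 < d/2$''; for $d$ odd this latter condition is equivalent to $r < (d-1)/2$ (since $2r+2 < d$ iff $r \leq (d-3)/2$), and together with the $d$ even case this yields precisely the corollary's hypothesis. Once this bookkeeping is in place the argument is essentially definitional, and no further geometric input is needed beyond Theorem \ref{vfcob.}.
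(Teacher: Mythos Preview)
Your proposal is correct and matches the intended argument: the paper states this corollary without proof, expecting it to follow directly from Theorem~\ref{vfcob.} together with the forgetful diagram~\eqref{forget}, exactly as you outline. Your hypothesis bookkeeping for the second part is also right, and is the only point requiring care.
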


\section{Equivalence of zigzags}\label{genrel}
We saw in Theorem \ref{morph} that all elements of $\pi_1(B\Cob_d^\theta)$ are represented by zigzags of morphism paths. In this section we find necessary and sufficient conditions for two such zigzags to be homotopic.

First some notation. We picture a zigzag of morphisms $\dotsm \bar{\gamma}_{W_i}\cdot \gamma_{W_{i+1}}\dotsm$ by
\begin{equation}\label{zzrepr}
\dotsm \to M_i \xleftarrow{W_i} M_{i+1} \xrightarrow{W_{i+1}} M_{i+2} \xleftarrow{} \dotsm
\end{equation}
Moreover, $\partial_i : \Mor(\Cob_d^\theta) \to \Ob(\Cob_d^\theta)$ will denote the boundary maps which are defined for $W \in \Cob_d^\theta(M_0,M_1)$ by $\partial_i(W)=M_i$ for $i=0,1$.

The main theorem of this section is:
\begin{thm}\label{zzrel}
Two zigzags  represented by a diagram like \eqref{zzrepr} are homotopic relative to endpoints if and only if they are related by a finite sequence of moves of the following two types:
\begin{enumerate}[label=(\Roman*)]
\item \label{rel1} Any sequence of arrows from $M_i$ to $M_j$ in the diagram
\begin{equation*}
\xymatrix{{M_0}\ar[d]_{W_1}\ar[dr]^{W_1\circ W_2}&{}\\
{M_1}\ar[r]_{W_2}&{M_2,}
}
\end{equation*}
may be replaced by any other such sequence.
\item \label{rel2}
Suppose $W,W'\in \Cob_d^\theta(M_0,M_1)$. Then $W$ may be replaced by  $W'$ in the zigzag
if there exists a path $\gamma: I \to \Mor(\Cob_d^\theta)$ from $W$ to $W'$ such that  $\partial_i\circ \gamma : I \to \Ob(\Cob_d^\theta)$ are constant for $i = 0,1$.
\end{enumerate}
\end{thm}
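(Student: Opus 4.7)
The plan splits into two directions: sufficiency, showing each of moves (I) and (II) preserves the homotopy class of a zigzag; and necessity, showing any homotopy of zigzags decomposes into finitely many such moves.

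Sufficiency is direct from the simplicial structure of $B\Cob_d^\theta$. Move (I) is realized by the 2-simplex $\{(W_1,W_2)\}\times \Delta^2 \subset B\Cob_d^\theta$: its three edges are $\gamma_{W_1}$, $\gamma_{W_2}$, and $\gamma_{W_1\circ W_2}$, and the simplex itself provides the homotopy between any two paths along its edges from $M_0$ to $M_2$ (this is exactly \eqref{glue}). Move (II) is realized by the map $\gamma\times\mathrm{id} : I\times\Delta^1 \to \Mor(\Cob_d^\theta)\times\Delta^1 \to B\Cob_d^\theta$; the assumption that $\partial_0\circ\gamma$ and $\partial_1\circ\gamma$ are constant guarantees that the restrictions to $I\times\{0\}$ and $I\times\{1\}$ are constant in $B\Cob_d^\theta$, giving a homotopy rel endpoints from $\gamma_W$ to $\gamma_{W'}$.

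For necessity, let $H : I\times I \to B\Cob_d^\theta$ be a homotopy rel $\partial I\times I$ between two zigzags. The approach is cellular approximation combined with transversality. First, since a $\pi_1$-class only sees the 2-skeleton, deform $H$ rel boundary into the image $F_2\subset B\Cob_d^\theta$ of $\bigsqcup_{j\le 2} N_j(\Cob_d^\theta)\times\Delta^j$. Next, choose a fine triangulation $\mathcal{T}$ of $I\times I$ extending a triangulation of the two zigzags on the horizontal boundary, sufficiently fine that each closed 2-simplex of $\mathcal{T}$ maps into exactly one of the standard regions of $F_2$: a subset of $\Ob(\Cob_d^\theta)$, a single stripe $\{W\}\times\Delta^1$, or a single 2-cell $\{(W_1,W_2)\}\times\Delta^2$. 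Such a triangulation exists by compactness and the openness of these regions. Finally, pass from the zigzag on the bottom of $\mathcal{T}$ to the top one triangle at a time, using Lemma~\ref{obmor} to replace any object-space subpaths by morphism paths; each single-triangle passage is either of type (I) (contracting or expanding a composable pair inside a 2-cell) or of type (II) (a continuous deformation inside a single $\Cob_d^\theta(M_0,M_1)$).

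The main obstacle is producing the adapted triangulation $\mathcal{T}$, which amounts to transversality in a non-simplicial setting since the nerve spaces $N_k(\Cob_d^\theta)$ are topological rather than discrete and one must respect the identifications $\sim$ collapsing degenerate simplices. Degenerate 2-simplices of the form $\{(W,\mathrm{id})\}\times\Delta^2$ pose no problem, as they make move (I) trivially valid; identity morphisms appearing in the middle of a zigzag can be avoided by a preliminary perturbation or absorbed using the cylinder construction of Lemmas~\ref{obsti} and~\ref{obmor}. An alternative would be to invoke a general presentation of $\pi_1(B\mathcal{C})$ for a topological category $\mathcal{C}$ in terms of $\pi_0(\Mor)$ with relations from $N_2$ and paths within morphism spaces, then translate to the zigzag language via Theorem~\ref{morph}.
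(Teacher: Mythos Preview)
Your sufficiency argument is correct and matches the paper's. The gap is in necessity: the triangulation you describe cannot exist in general. Because $\Mor(\Cob_d^\theta)$ and $N_2(\Cob_d^\theta)$ carry nontrivial topology, the sets $\{W\}\times\Delta^1$ and $\{(W_1,W_2)\}\times\Delta^2$ are \emph{not} open in $F_2$, so compactness yields no triangulation whose $2$-simplices land in single stripes or single $2$-cells; a generic homotopy $H$ sweeps through a continuum of morphisms. If instead you relax the requirement to ``maps into $\Mor(\Cob_d^\theta)\times\Delta^1$'' rather than a single stripe, then a one-triangle passage is no longer a Type~\ref{rel2} move: moving in the $\Mor$-direction typically moves the source and target too, so $\partial_i\circ\gamma$ is not constant. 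Lemma~\ref{diffrel} shows that such a move can be rewritten as a combination of Types~\ref{rel1} and~\ref{rel2}, but that is exactly the nontrivial bookkeeping you have left undone; your sketch treats the topological category as if it were discrete.

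The paper avoids transversality altogether and uses the groupoid van Kampen theorem of Brown--Salleh (with Higgins' description of coequalizers of groupoids), applied twice. First, for $\Ob\cup(\Mor\times\Delta^1)$, the cover by $\Ob\cup(\Mor\times(\Delta^1\setminus\{b\}))$ and $\Mor\times\indre(\Delta^1)$ with multiple basepoints produces an explicit presentation of $\pi_1$ as words in $\pi_1(\Ob)$ and $\pi_1(\Mor)\times G$ modulo three families of relations; each generator is then assigned a canonical zigzag, and one checks (using Lemma~\ref{diffrel} for the mixed cases) that the relations translate to Type~\ref{rel1} and~\ref{rel2} moves. This is Theorem~\ref{zz0}. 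A second van Kampen application then adjoins $N_2(\Cob_d^\theta)\times\Delta^2$ and shows that the only new relations introduced are exactly the Type~\ref{rel1} triangles. Your closing ``alternative'' points in this direction, but it is the substance of the argument, not an aside.
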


A more geometric interpretation of the relation \ref{rel2} is given by the following:
\begin{lem} \label{diffeo}
Two morphisms $W_0$ and $W_1$ may be joined by a path $\gamma : I \to \Mor(\Cob_d^\theta)$ with $\partial_i\circ\gamma$ constant if and only if there is a diffeomorphism between them that fixes $\partial_i W_j$ pointwise for $i,j =0,1$ and preserves the equivalence class of $\theta$-structures.
\end{lem}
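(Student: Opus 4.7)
The plan is to prove both implications using standard tools from differential topology, namely the isotopy extension theorem and the contractibility of spaces of embeddings into a high-dimensional Euclidean space, combined with the homotopy lifting property of the fibration $\theta$.

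For the forward direction, suppose $\gamma : I \to \Mor(\Cob_d^\theta)$ is a path from $W_0$ to $W_1$ with $\partial_i \circ \gamma$ constant. Forgetting the $\theta$-structure, $\gamma$ gives a smooth isotopy of embedded submanifolds $W_t \subseteq (-1,1)^{n+d-1} \times [0,a]$ that agree with $M_0 \times [0,\eps)$ and $M_1 \times (a-\eps, a]$ near the boundary collars (the parameter $a$ may be taken constant, up to an initial reparametrization). The isotopy extension theorem then produces an ambient isotopy, supported away from the boundary collars, whose time-one diffeomorphism restricts to a diffeomorphism $\phi : W_0 \to W_1$ which is the identity on a neighborhood of the boundary, hence in particular pointwise on $\partial_i W_j$. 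The $\theta$-structure data along $\gamma$ is, by definition, a continuous family of lifts of the classifying maps $W_t \to B(d)$; pulling back along the ambient isotopy yields a homotopy of lifts on the fixed manifold $W_0$ from the original $\theta$-structure to $\phi^*$ of the one on $W_1$, which is exactly the statement that the two $\theta$-structures are equivalent.

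For the converse, assume a diffeomorphism $\phi : W_0 \to W_1$ fixing the boundary and a homotopy of lifts exhibiting equivalence of $\theta$-structures. First I would produce a path of embeddings of the abstract manifold $W_0$ into $(-1,1)^{n+d-1} \times [0,a]$ starting at the inclusion $W_0 \hookrightarrow (-1,1)^{n+d-1}\times [0,a]$ and ending at the embedding $W_0 \xrightarrow{\phi} W_1 \hookrightarrow (-1,1)^{n+d-1}\times [0,a]$, with the restriction to the boundary collars constant throughout. This uses the contractibility of the space of embeddings of a compact manifold into $\R^\infty$ rel collar, which is classical and is implicit in the setup of \cite{monoids}; in particular, after allowing $n$ to grow, any two such embeddings agreeing on a collar can be connected through embeddings agreeing on that collar, and the target may be confined to $(-1,1)^{n+d-1} \times [0,a]$ by composing with a diffeomorphism $\R^{n+d-1} \cong (-1,1)^{n+d-1}$ that is the identity on the image of the embedding. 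With this path $W_t$ in hand, I would lift the $\theta$-structure continuously in $t$ using that $\theta$ is a fibration, starting from the $\theta$-structure on $W_0$. At $t=1$ this produces some $\theta$-structure on $W_1$, which by construction is equivalent (as a lift over $W_1$) to $(\phi^{-1})^*$ of the starting one; using the given equivalence to the original $\theta$-structure on $W_1$, we concatenate with a further homotopy of lifts over $W_1$ (constant as a family of submanifolds) to land exactly at $W_1$ with its given $\theta$-structure.

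The main obstacle is the rel-boundary construction of the path of embeddings: one must simultaneously preserve the cylindrical form near the collars and keep the image inside the open slab $(-1,1)^{n+d-1}\times [0,a]$. I would handle this by splitting $W_0$ as the union of its fixed boundary collars and a compact complement $K$ with boundary on the inner ends of the collars, then applying the contractibility argument only to embeddings of $K$ that agree with the fixed collar embedding on $\partial K$; Whitney-type general position, available after passing to large $n$, gives contractibility of this space of embeddings. The resulting homotopy of embeddings of $K$ glues with the fixed collar embeddings to the required path, and a final reparametrization in the $[0,a]$-direction reinstates the cylindrical condition near the endpoints.
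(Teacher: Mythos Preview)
Your argument is correct in outline and substantially more detailed than what the paper supplies: the paper does not give a proof at all, stating only that the claim ``is straightforward from the way the topology on $\Mor(\Cob_d^\theta)$ is defined.'' So there is no real comparison of approaches to make; you have written out what the authors leave implicit.

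A couple of small points worth tightening. In the forward direction you treat $\gamma$ as a smooth isotopy, but an arbitrary continuous path in $\Mor(\Cob_d^\theta)$ need not be smooth; since the statement only concerns the existence of a path, you should first replace $\gamma$ by a smooth path rel endpoints (this is exactly \cite{monoids}, Lemma~2.18, already invoked elsewhere in the paper). In the converse direction, when you lift the $\theta$-structure along the path of embeddings you need the lift to remain constant on the collars so that $\partial_i\circ\gamma$ stays constant as an object with $\theta$-structure, not merely as a submanifold; this follows because the collar inclusion is a cofibration and $\theta$ is a fibration, so the homotopy lifting can be taken rel the collars. With these remarks your plan goes through.
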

%
 An equivalence class of $\theta $-structures means an element of $\pi_0(\textrm{Bun}(TW,\theta^*U_d))$ where $\textrm{Bun}(TW,\theta^*U_d)$ is the space of bundle maps $TW \to \theta^*U_d$. The proof is straightforward from the way the topology on $\Mor(\Cob_d^\theta)$ is defined.

\begin{lem}\label{diffrel}
Let $\gamma :[0,1] \to \Mor(\Cob_d^\theta)$ be a smooth path from $W_0$ to $W_1$ that is constant near $0,1$. Let $W_{\partial_i\gamma}$ be the morphisms determined by $\partial_i\circ \gamma:[0,1]\to \Ob(\Cob_d^\theta)$ for $i=0,1$. Then 
$\gamma_{W_0\circ W_{\partial_1\gamma}}$ and $\gamma_{W_{\partial_0\gamma}\circ W_1}$  differ only by a Type \ref{rel2} move.
\end{lem}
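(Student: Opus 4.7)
The plan is to construct an explicit continuous path inside $\Mor(\Cob_d^\theta)$ from $W_0 \circ W_{\partial_1\gamma}$ to $W_{\partial_0\gamma} \circ W_1$ whose boundary maps are both constant; this is precisely what is required for a Type \ref{rel2} move. The path $\gamma$ itself provides the data for the homotopy: at time $s$ we take the morphism $\gamma(s)$ and complete it on the left by the cylinder associated to the portion of $\partial_0\gamma$ already traversed, and on the right by the cylinder associated to the portion of $\partial_1\gamma$ still to be traversed.

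More precisely, for each $s\in[0,1]$ I would consider the sub-paths $\alpha_s=\partial_0\gamma|_{[0,s]}$ and $\beta_s=\partial_1\gamma|_{[s,1]}$ (suitably reparameterized so that they are smooth and constant near their endpoints, as is required by the $W_{(\cdot)}$ construction used in Section~\ref{reppi1}). These determine cylinder-type morphisms $W_{\alpha_s}$ from $\partial_0\gamma(0)$ to $\partial_0\gamma(s)$ and $W_{\beta_s}$ from $\partial_1\gamma(s)$ to $\partial_1\gamma(1)$, and I define
\begin{equation*}
W(s) \;=\; W_{\alpha_s}\circ \gamma(s)\circ W_{\beta_s}\;\in\;\Cob_d^\theta(\partial_0\gamma(0),\partial_1\gamma(1)).
\end{equation*}
By design, $\partial_0 W(s)$ and $\partial_1 W(s)$ are independent of $s$, so $s\mapsto W(s)$ is a candidate Type \ref{rel2} path; the endpoint checks are immediate, since $\alpha_0$ and $\beta_1$ are constant paths giving identity cylinders, whence $W(0)=W_0\circ W_{\partial_1\gamma}$ and $W(1)=W_{\partial_0\gamma}\circ W_1$ after absorbing the identity cylinder factors.

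The main obstacle will be verifying continuity of $s\mapsto W(s)$ in the topology on $\Mor(\Cob_d^\theta)$ specified in \cite{monoids}, together with keeping track of the $a$-parameter, which varies with $s$ as the cylinders $W_{\alpha_s}$ and $W_{\beta_s}$ lengthen or shorten. This is a matter of arranging the reparameterizations and the gluing of $\theta$-structures so that the underlying subsets of $\R^{n+d-1}\times\R$ and their lifts vary continuously with $s$; no new geometric input beyond what was already used in Lemmas~\ref{obsti} and~\ref{obmor} is needed, and in fact the construction can be viewed as a parametric version of the idea in Lemma~\ref{obsti}. Once continuity is in hand, constancy of $\partial_0\circ W(\cdot)$ and $\partial_1\circ W(\cdot)$ follows directly from the construction, and the lemma is concluded by invoking the definition of the Type \ref{rel2} move.
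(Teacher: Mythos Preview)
Your approach is essentially the same as the paper's: both construct the Type~\ref{rel2} path as a three-fold composition $\gamma_0(s)\circ\gamma(s)\circ\gamma_1(s)$, where $\gamma_0(s)$ is a ``partial cylinder'' from $\partial_0\gamma(0)$ to $\partial_0\gamma(s)$ and $\gamma_1(s)$ is a partial cylinder from $\partial_1\gamma(s)$ to $\partial_1\gamma(1)$. The only difference is in how these partial cylinders are produced. The paper works in the auxiliary category $D_\theta$ of \cite{monoids} (exactly as in Lemma~\ref{obsti}): the paths $t\mapsto (W_{\partial_0\gamma},0\le t)$ and $t\mapsto (W_{\partial_1\gamma},t\le 1)$ in $\Mor(D_\theta)$ are lifted through the equivalences \eqref{cathom} to obtain $\gamma_0$ and $\gamma_1$ in $\Mor(\Cob_d^\theta)$, and the composition is then observed to extend over $t=0,1$. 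You instead build $W_{\alpha_s}$ and $W_{\beta_s}$ directly from reparameterized restrictions of $\partial_i\gamma$. Your route is more hands-on and makes the continuity and $a$-parameter bookkeeping (including the degeneration of the cylinder at $s=0,1$, where your ``absorbing the identity cylinder factors'' needs to become a genuine limit in $\Mor(\Cob_d^\theta)$) an explicit verification; the paper's route hides this inside the already-established equivalences with $D_\theta^\perp$ and $D_\theta$, at the cost of invoking that machinery. Either way the argument is the same in substance.
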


\begin{proof}
Look at the morphisms $(W_{\partial_0\gamma},0\leq 1)$ and $(W_{\partial_1 \gamma },0\leq 1)$ in the category $D^\theta_d$. There are paths in $\Mor(D^\theta_d)$ given by $ (W_{\partial_0\gamma},0\leq t)$ and $(W_{\partial_1\gamma},t\leq 1)$ for $t\in (0,1)$. These lift to paths $\gamma_0$ and $\gamma_1$ in $\Mor(\Cob_d^\theta)$ under the homotopy equivalences \eqref{cathom} satisfying
\begin{equation*}
\begin{split}
\gamma_0(t) &\in \Cob_d^\theta(M_0,\partial_0(\gamma(t))) \\ 
\gamma_1(t) &\in \Cob_d^\theta(\partial_1(\gamma(t)),M_1). 
\end{split}
\end{equation*}
Thus the composition of morphisms $\gamma_0(t)\circ \gamma(t) \circ \gamma_1(t)\in \Cob_d^\theta(M_0,M_1)$ is a well-defined path in the morphism space for $t\in (0,1)$. This naturally extends to all $t \in [0,1]$, and this is the desired path from ${W_0} \circ {W_{\partial_1\gamma}}$ to ${W_{\partial_0\gamma}} \circ {W_1}$.
\end{proof}

The next two proofs consider homotopy groups with multiple base points. If $X$ is a topological space and $X_0$ is a discrete subset, $\pi_1(X,X_0)$ denotes the set of homotopy classes of paths in $X$ starting and ending in $X_0$. The path composition makes this into a groupoid where the identity elements correspond to the constant paths. 

\begin{thm}\label{zz0}
Any two zigzags that are homotopic inside $\Ob(\Cob_d^\theta)\cup (\Mor(\Cob_d^\theta)\times \Delta^1)$ are related by a sequence of Type \ref{rel1} and \ref{rel2} moves.
\end{thm}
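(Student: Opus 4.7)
The plan is to analyse a homotopy $H: I \times I \to X$ between two zigzag paths $z_0$ and $z_1$, where $X := \Ob(\Cob_d^\theta) \cup (\Mor(\Cob_d^\theta) \times \Delta^1)$, and to read off a finite sequence of Type \ref{rel1} and \ref{rel2} moves by a Cerf-style analysis of $H$.

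First I would put $H$ into generic position relative to a natural stratification of $X$. Parameterise $\Delta^1 = [0,1]$ and consider the ``spine'' $S := \Mor(\Cob_d^\theta) \times \{1/2\} \subset X$. After a small perturbation, justified locally using the smooth structure on $\Mor(\Cob_d^\theta)$ from \cite{monoids} together with compactness of $I \times I$, the preimage $N := H^{-1}(S)$ becomes a properly embedded $1$-submanifold of $I \times I$, transverse to the boundary, and consisting of finitely many arcs and circles. Each connected component of $N$ carries a continuous label in $\Mor(\Cob_d^\theta)$, so it sits inside a single path component of the morphism space.

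Next I would sweep the horizontal line $I \times \{s\}$ from $s = 0$ to $s = 1$. The ordered intersection $I \times \{s\} \cap N$, equipped with the sign of each crossing given by the local orientation of the $\Delta^1$-coordinate, records the zigzag $H(\cdot, s)$ after absorbing the intervening $\Ob$-segments into nearby morphism paths via Lemma \ref{obmor}. As $s$ varies, this intersection pattern changes only at finitely many critical values, where one of the following occurs: a Morse extremum of an arc creates or cancels an adjacent inverse pair $\gamma_W \cdot \bar{\gamma}_W$, which is a degenerate instance of Type \ref{rel1} combined with the implicit inverse-cancellation built into the notion of a path; a ``saddle'' event splits or merges two crossings, implementing $\gamma_{W_1} \cdot \gamma_{W_2} \leftrightarrow \gamma_{W_1 \circ W_2}$, which is Type \ref{rel1}; or between critical values the labelling morphism $W$ of an arc varies continuously while its two boundary paths in $\Ob$ stay constant, which is precisely a Type \ref{rel2} move by Lemma \ref{diffrel}. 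Assembling the moves at the critical values in order produces the required finite sequence converting $z_0$ to $z_1$.

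The main obstacle is the transversality in the first step: since $\Mor(\Cob_d^\theta)$ is infinite-dimensional, classical smooth transversality does not apply. I would work chart-by-chart, using that any compact subset of $\Mor(\Cob_d^\theta)$ is covered by finitely many smooth coordinate patches in the sense of \cite{monoids}, approximate $H$ in each patch by a piecewise-smooth map placing $H^{-1}(S)$ into general position, and patch the local approximations together. A secondary subtlety is matching the local pictures in $I \times I$ to the formally allowed moves and handling any residual $\Ob$-paths appearing between consecutive morphism strands; Lemma \ref{obmor} rewrites each such $\Ob$-segment as a morphism path so that Type \ref{rel1} and \ref{rel2} can be applied uniformly.
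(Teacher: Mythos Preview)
Your approach has a concrete gap at the ``saddle'' step. The preimage $N=H^{-1}(S)$ is a $1$-manifold, so the projection to the $s$-factor has only local minima and maxima as generic critical points; these produce birth and death of adjacent inverse pairs $\gamma_W\cdot\bar{\gamma}_W$, nothing more. There is no elementary event implementing $\gamma_{W_1}\cdot\gamma_{W_2}\leftrightarrow\gamma_{W_1\circ W_2}$, and in fact that homotopy genuinely requires the $2$-simplex $\{(W_1,W_2)\}\times\Delta^2$, which lies outside $\Ob(\Cob_d^\theta)\cup(\Mor(\Cob_d^\theta)\times\Delta^1)$. Relatedly, your appeal to Lemma~\ref{obmor} to absorb intervening $\Ob$-segments is invalid in the $1$-skeleton: the proof of that lemma uses a $2$-simplex in $B\Cob_d^\theta$. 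Lemma~\ref{obsti} does stay in the $1$-skeleton, but it only absorbs an $\Ob$-path into an \emph{adjacent} morphism path, changing that morphism's label; it does not let you ignore $\Ob$-segments.

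The real content of the theorem is exactly in controlling how the $\Ob$-segments between consecutive spine-crossings, together with continuously varying morphism labels whose endpoints $\partial_i$ need not stay fixed, reorganise into Type~\ref{rel1} and~\ref{rel2} moves. Your ``between critical values'' step asserts this is a Type~\ref{rel2} move via Lemma~\ref{diffrel}, but that lemma compares $W_0\circ W_{\partial_1\gamma}$ with $W_{\partial_0\gamma}\circ W_1$, not $\gamma_{W_0}$ with $\gamma_{W_1}$; to invoke it you must first have absorbed the flanking $\Ob$-paths correctly and matched up the resulting composites, which is precisely the bookkeeping you have not done. The paper avoids these difficulties entirely: it computes $\pi_1(\Ob(\Cob_d^\theta)\cup(\Mor(\Cob_d^\theta)\times\Delta^1),X_0')$ algebraically via a groupoid van~Kampen theorem for the cover $U_1\cup U_2$, obtaining a presentation in terms of $\pi_1(\Ob)$, $\pi_1(\Mor)$, and a four-element groupoid $G$ recording the $\Delta^1$-direction, and then checks relation by relation that the presenting relations (i)--(iii) translate into Type~\ref{rel1} and~\ref{rel2} moves on canonically associated zigzags.
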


\begin{proof}
First choose a set of objects $M_i$ for $i \in I$, one in each path component of $\Ob(\Cob_d^\theta)$. Then choose a $W_j$, $j\in J$, in each component of $\Mor(\Cob_d^\theta)$ such that $\partial_\eps(W_j) \in \{M_i, i \in I\}$ for all $j\in J$ and $\eps = 0,1$. These will serve as the base point sets. By construction, the source and target maps are base point preserving.

To describe $\pi_1(\Ob(\Cob_d^\theta)\cup (\Mor(\Cob_d^\theta)\times \Delta^1))$, we need a generalized version of the van Kampen theorem. This is the main theorem of \cite{brown}. For this, let $b\in \indre(\Delta^1)$ and 
\begin{align*}
U_1 =& \Ob (\Cob_d^\theta)\cup (\Mor(\Cob_d^\theta)\times (\Delta^1\backslash \{b\}))\\
U_2 =& \Mor(\Cob_d^\theta)\times \indre(\Delta^1)\\
U_1\cap U_2 =& \Mor(\Cob_d^\theta)\times (\indre(\Delta^1)\backslash \{b\})\\
X_0 =& \{(W_j,\eps)\mid j\in J, \eps = 0,1\}\\
X_0' =& \{M_i\mid i\in I\}.
\end{align*}
Here $(W_j,\eps)$ should be interpreted as the point $((W_j,a),\delta)$ or $((W_j,a),1-\delta)$ in $ \Mor(\Cob_d^\theta)\times (\indre(\Delta^1)\backslash \{b\})$ for $\eps=0,1$, respectively.

According to \cite{brown}, $\pi_1(\Ob(\Cob_d^\theta)\cup (\Mor(\Cob_d^\theta)\times \Delta^1),X_0)=\pi_1(U_1\cup U_2,X_0)$ is the coequalizer in the category of groupoids of the diagram
\begin{equation*}
\pi_1(U_1\cap U_2,X_0) \rightrightarrows \pi_1(U_1,X_0)\sqcup\pi_1(U_2,X_0) \to \pi_1(U_1\cup U_2,X_0)
\end{equation*}
We begin by describing the first three groupoids in the diagram.

The map $\pi_1(U_1,X_0) \to \pi_1(U_1,X_0')$ induced by $(W_j,\eps)\mapsto \partial_\eps(W_j)$ is a vertex and piecewise surjection in the sense of \cite{higgins}, and thus it is a quotient map, according to~\cite{higgins}, Proposition 25. The kernel is the inverse image of the identity elements, i.e.\ the set 
\begin{equation*}
N=\bigsqcup_{i\in I} \{((W_{j_1},\eps_1 ), (W_{j_2},\eps_2 ))\mid j_1,j_2 \in J, \eps_1,\eps_2 \in \{0,1\}, \partial_{\eps_1}W_{j_1}=\partial_{\eps_2}W_{j_2} =M_i\}
\end{equation*}
 with multiplication 
\begin{equation*}
((W_{j_1},\eps_1 ), (W_{j_2},\eps_2 ))((W_{j_2},\eps_2 ), (W_{j_3},\eps_3 ))=((W_{j_1},\eps_1 ), (W_{j_3},\eps_3 )).
\end{equation*}
If $\pi_1(U_1,X_0)$ is replaced by $\pi_1(U_1,X_0')$ in the coequalizer diagram, $\pi_1(U_1\cup U_2,X_0)$ must be replaced by the quotient of this with the normal subgroupoid generated by $N$, c.f.\ \cite{higgins}, Proposition 27. But $N$ is also the kernel of the quotient map 
\begin{equation*}
\pi_1(U_1\cup U_2,X_0) \to \pi_1(U_1\cup U_2,X_0'),
\end{equation*}
so the new coequalizer diagram becomes
\begin{equation*}
\pi_1(U_1\cap U_2,X_0) \rightrightarrows \pi_1(U_1,X_0')\sqcup\pi_1(U_2,X_0) \to \pi_1(U_1\cup U_2,X_0').
\end{equation*}

We compute:
\begin{align*}
\pi_1(U_1\cap U_2,X_0) =& \bigsqcup_{j\in J} \pi_1(\Mor(\Cob_d^\theta),W_j)\times \{0,1\}\\
\pi_1(U_1,X_0') =& \bigsqcup_{i\in I} \pi_1(\Ob(\Cob_d^\theta),M_i)\\
\pi_1(U_2,X_0) =& \bigsqcup_{j\in J} \pi_1(\Mor(\Cob_d^\theta),W_j)\times G.
\end{align*}
Here $G=\{(i,j)\mid i,j=0,1\}$ is the groupoid with multiplication $(i,j)(j,k) = (i,k)$.

By \cite{higgins} the coequalizer, viewed as a category, is given as follows. The object set is just the set of base points $X_0'$. A morphism is represented by a sequence $x_1\dotsm x_n$ where each $x_i$ is an element of either $\pi_1(U_1,X_0')$ or $\pi_1(U_2,X_0)$ such that the target of $x_i$ coincides with the source of $x_{i+1}$ in $X_0'$. Two such sequences are equivalent if and only if they are related by a sequence of relations of the following three types:
\begin{itemize}
\item[(i)]If $e$ is an identity element in either $\pi_1(U_1,X_0')$ or $\pi_1(U_2,X_0)$, then
\begin{align*}
\dotsm x_i e x_{i+1} \dotsm  \simeq \dotsm x_i x_{i+1} \dotsm 
\end{align*}
\item[(ii)]If the product $x_ix_{i+1}=x$ makes sense in either $\pi_1(U_1,X_0')$ or $\pi_1(U_2,X_0)$, then
\begin{align*}
\dotsm x_i x_{i+1} \dotsm  \simeq \dotsm x \dotsm 
\end{align*}
\item[(iii)] Let $i_1:\pi_1(U_1\cap U_2,X_0) \to \pi_1(U_1,X_0')$ and $i_2:\pi_1(U_1\cap U_2,X_0) \to \pi_1(U_2,X_0)$ denote the inclusions. Then for $x\in \pi_1(U_1\cap U_2,X_0)$, 
\begin{align*}
\dotsm i_1(x) \dotsm \simeq \dotsm i_2(x) \dotsm
\end{align*} 
\end{itemize} 

The next step is to canonically identify such a sequence $x_1\dotsm x_n$ with a zigzag representing the same homotopy class. To each $x_i$ we associate a part of a zigzag representing $x_i$ in $\pi_1(U_1\cup U_2,X_0')$ in the following way.
If $x_i \in \pi_1(\Ob(\Cob_d^\theta ),M_l)$, let $\alpha : I \to \Ob(\Cob_d^\theta)$ be a smooth representative. This   corresponds to a morphism $W_\alpha$ with $\alpha \simeq \bar{\gamma}_{M_l\times \R} \cdot \gamma_{W_\alpha}$ by Lemma~\ref{obsti}. Otherwise $x_i$ has the form $([\gamma],(k,l))$ for some $[\gamma ] \in \pi_1(\Mor(\Cob_d^\theta ),W_j) $ and $k,l\in \{0,1\}$. We choose the following assignments:
\begin{equation*}
\begin{split}
[\alpha ] &\rightsquigarrow \cdot \xleftarrow{\alpha(0)\times \R} \cdot \xrightarrow{W_\alpha} \cdot \\
([\gamma ],(0,0 )) &\rightsquigarrow \cdot \xleftarrow{\partial_0\gamma(0) \times \R} \cdot \xrightarrow{W_{\partial_0\gamma}} \cdot \\
([\gamma ],(1,1 ))&\rightsquigarrow  \cdot  \xrightarrow{W_{\partial_1\gamma}} \cdot \xleftarrow{\partial_1\gamma(0)\times \R} \cdot \\
([\gamma ],(0,1 ))&\rightsquigarrow  \cdot  \xleftarrow{\partial_0\gamma(0)\times \R} \cdot \xrightarrow{W_{\partial_0\gamma}} \cdot \xrightarrow{W_j} \cdot \\
([\gamma ],(1,0 ))&\rightsquigarrow \cdot \xleftarrow{W_j}\cdot \xleftarrow{\partial_0\gamma(0)\times \R} \cdot \xrightarrow{W_{\partial_0\gamma}} \cdot
\end{split}
\end{equation*}
These zigzags have the correct homotopy type due to Lemma \ref{obsti}. 
Note that the manifolds $W_{\alpha}$ depend on the choice of representative $\alpha $. A different choice of representative yields a morphism that differs from $W_\alpha $ by a Type \ref{rel2} move. Hence the assignment is canonical up to Type \ref{rel2} moves. 

To each sequence $x_1 \dotsm x_n$ this associates a zigzag. We need to see that the relations (i)-(iii) on sequences correspond to performing Type \ref{rel1} and \ref{rel2} moves on the associated zigzags. This is a straightforward check, and we will only show some of the relations.\newline

 {(i)} If $e$ is the identity element in $\pi_1(\Ob(\Cob_d^\theta),M_l)$, $W_e=M_l\times \R$. Hence this relation just removes a 
\begin{equation*}
\cdot \xleftarrow{M_l\times \R} \cdot\xrightarrow{M_l\times \R} \cdot
\end{equation*}
from the zigzag. This is a Type \ref{rel1} move.\newline

{\textrm{(ii)}} If $x_i,x_{i+1}\in \pi_1(\Ob(\Cob_d^\theta),M_l)$ are represented by smooth loops $\alpha_i$ and $\alpha_{i+1}$, the relation becomes 
\begin{equation*}
\cdot \xleftarrow{M_l\times \R} \cdot \xrightarrow{W_{\alpha_i}}\cdot \xleftarrow{M_l\times \R} \cdot \xrightarrow{W_{\alpha_{i+1}}} \cdot \simeq \cdot \xleftarrow{M_l\times \R} \cdot \xrightarrow{W_{\alpha_i\cdot \alpha_{i+1}}} \cdot
\end{equation*}
 But $W_{\alpha_i\cdot \alpha_{i+1}}$ is equal to $W_{\alpha_i}\circ W_{\alpha_{i+1}}$ up to a Type \ref{rel2} move, so the zigzags differ only by  Type~\ref{rel1} and~\ref{rel2} moves.

If $x_i,x_{i+1}\in \pi_1(\Mor(\Cob_d^\theta),W_j)\times G$, there are various special cases to check. 
We shall check only
%
the case
 $x_i = ([\gamma_i],(1,0))$ and $x_{i+1} = ([\gamma_{i+1}],(0,1))$ here. Then $x_ix_{i+1}$ defines the following part of a zigzag
\begin{equation*}
\cdot \xleftarrow{W_j}\cdot \xleftarrow{\partial_0 (W_j)\times \R} \cdot \xrightarrow{W_{\partial_0\gamma_i}}\cdot \xleftarrow{\partial_0 (W_j) \times \R} \cdot \xrightarrow{W_{\partial_0\gamma_{i+1}}} \cdot \xrightarrow{W_j} \cdot
\end{equation*}
 By Type \ref{rel1} and \ref{rel2} moves, this is equivalent to
 \begin{equation*}
\cdot \xrightarrow{W_{\partial_1\gamma_i}}\cdot \xleftarrow{W_{\partial_1\gamma_{i}}} \cdot \xleftarrow{W_j}\cdot \xrightarrow{W_{\partial_0\gamma_i}}\cdot \xrightarrow{W_{\partial_0\gamma_{i+1}}} \cdot \xrightarrow{W_j} \cdot 
\end{equation*}
By Lemma \ref{diffrel}, this is again equivalent to
 \begin{equation*}
\cdot \xrightarrow{W_{\partial_1\gamma_i}}\cdot \xleftarrow{W_j} \cdot \xleftarrow{W_{\partial_0\gamma_{i}}} \cdot \xrightarrow{W_{\partial_0\gamma_i}}\cdot \xrightarrow{W_j} \cdot  \xrightarrow{W_{\partial_1\gamma_{i+1}}}\cdot
\end{equation*}
 Removing the middle part by Type \ref{rel1} moves yields
 \begin{equation*}
 \cdot \xrightarrow{W_{\partial_1(\gamma_{i} \cdot \gamma_{i+1})}} \cdot \xleftarrow{\partial_1 (W_j) \times \R} \cdot 
\end{equation*}
This corresponds to the product $([\gamma_i],(1,0))\cdot ([\gamma_{i+1}],(0,1))=([\gamma_i\cdot \gamma_{i+1}],(1,1))$.\newline

{(iii)} This is obvious from the definitions.\newline

We are now ready to prove the theorem. Let a zigzag be given. For each morphism 
\begin{equation}\label{zzW}
\dotsm  \xrightarrow{W}  \dotsm,
\end{equation}
we do as follows. First choose a smooth path $\gamma$ from $W$ to the base point $W_j$ in the $W$ component of $\Mor(\Cob_d^\theta)$. Then \eqref{zzW} is homotopic and equivalent to
\begin{equation*}
\dotsm \xleftarrow{\partial_0(W) \times \R} \cdot \xrightarrow{W_{\partial_0\gamma }} \cdot \xrightarrow{W_j}\cdot \xleftarrow{\partial_1(W)\times \R} \cdot \xrightarrow{W_{\overline{\partial_1\gamma} }}  \dotsm
\end{equation*}
by Lemma \ref{diffrel} and \ref{obsti}. 
But this zigzag is associated to a sequence $x_1\dotsm x_n$. Given another zigzag homotopic to this one, it is also equivalent to a zigzag coming from a sequence $x_1'\dotsm x_{n'}'$. We know that these sequences are related by the operations (i)--(iii), and this corresponds to doing Type \ref{rel1} and \ref{rel2} moves on the zigzags.
\end{proof}

\begin{proof}[Proof of Theorem \ref{zzrel}]
The relation \ref{rel1} certainly holds, since there is a 2-simplex in the classifying space having $\gamma_{W_1}$, $\gamma_{W_2}$ and $\gamma_{W_1\circ W_2}$ as its sides. The relation \ref{rel2} holds because the path $\gamma $ determines a homotopy between the two zigzags.

To see that these are the only relations, we apply the generalized van Kampen theorem once again. Note that the inclusion 
\begin{equation*}
\pi_1(\Ob(\Cob_d^\theta)\cup(\Mor(\Cob_d^\theta)\times \Delta^1) \cup (N_2(\Cob_d^\theta)\times \Delta^2)) \to \pi_1(B\Cob_d^\theta) 
\end{equation*}
 is an isomorphism.
 This time, let $b\in \indre(\Delta^2)$ and define
\begin{align*}
U_1=&\Ob(\Cob_d^\theta)\cup(\Mor(\Cob_d^\theta)\times \Delta^1) \cup (N_2(\Cob_d^\theta)\times \Delta^2\backslash \{b\})\\
U_2 =& N_2(\Cob_d^\theta)\times \indre(\Delta^2)\\
U_1\cap U_2 =&  N_2(\Cob_d^\theta)\times \indre(\Delta^2)\backslash \{b\}.
\end{align*} 
As base point set $X_0$, choose one representative $x_l=(W_1^l,W_2^l)$ for each element in $\pi_0(N_2(\Cob_d^\theta))$ such that $\partial_0(W_1^l)\in X_0'$ where $X_0'$ is as in the proof of Theorem \ref{zz0}. Then 
\begin{align*}
\pi_1(U_1,X_0) =&  \pi_1(\Ob(\Cob_d^\theta) \cup (\Mor(\Cob_d^\theta) \times \Delta^1),X_0)\\
\pi_1(U_2,X_0) =& \bigsqcup_{l\in L }\pi_1(N_2(\Cob_d^\theta),x_l)\\
\pi_1(U_1\cap U_2,X_0) =& \bigsqcup_{l \in L} \pi_1(N_2(\Cob_d^\theta),x_l) \times \Z.
\end{align*}
There is a map $X_0\to X_0'$ given by $(W_1^l,W_2^l)\mapsto \partial_0(W_1^l)$.
Again, this allows us to replace  $\pi_1(U_1,X_0)$ by $\pi_1(U_1,X_0') $ and $\pi_1(U_1\cup U_2,X_0)$ by $\pi_1(U_1\cup U_2,X_0')$.

Let $K= \bigsqcup_{l\in L} \{x_l\} \times \Z$ be the kernel of $i_2:\pi_1(U_1\cap U_2,X_0)\to \pi_1(U_2,X_0)$. Since $i_2$ is vertex and piecewise surjective in the sense of \cite{higgins}, Chapter 12, it is a quotient map. Thus $i_2:\pi_1(U_1\cap U_2,X_0)/K \to \pi_1(U_2,X_0)$ is an isomorphism.

Now we want to apply Proposition 27 of \cite{higgins} to compute the coequalizer of the diagram. Let $N_1(K)$ denote the normal subgroupoid of $\pi_1(U_1,X_0')$ generated by the image of $K$, and let $N_2(K)=\bigsqcup_{l\in L} \{{x_l}\}$ be the trivial normal subgroupoid of $\pi_1(U_2,X_0)$. Then there is a diagram
\begin{equation*}
K \rightrightarrows N_1(K)\sqcup N_2(K).
\end{equation*}
The coequalizer is the trivial normal subgroupoid, so by the proposition, there is a new coequalizer diagram
\begin{equation*}
\pi_1(U_1\cap U_2,X_0)/K\rightrightarrows \pi_1(U_1,X_0')/N_1(K) \sqcup \pi_1(U_2,X_0)\to \pi_1(U_1\cup U_2,X_0').
\end{equation*}
 But since $i_2:\pi_1(U_1\cap U_2,X_0)/K \to \pi_1(U_2,X_0)$ is an isomorphism, the coequalizer simply becomes $\pi_1(U_1,X_0')/N_1(K)$. This means that $\pi_1(U_1\cup U_2,X_0')$ is $\pi_1(U_1,X_0')$, which we computed in Theorem \ref{zz0}, with the only new relations being the Type~\ref{rel1} relations determined by the $x_l\in K$.
\end{proof}

\section{The chimera relations}
In this section we give another description of $\pi_1(B\Cob_d^\theta)$ in terms of generators and relations.

Let $F$ denote the free abelian group generated by diffeomorphism classes of $d$-dimensional manifolds with an equivalence class of $\theta$-structures. Let $[W]$ denote the class of $W$. 
Since $B\Cob_d^\theta$ is a loop space by \cite{GMTW}, its fundamental group is abelian.
Hence the homomorphism
\begin{equation}\label{F}
F \to \pi_1(B\Cob_d^\theta)
\end{equation}
taking $[W]$ to the homotopy class of $\gamma_W$ is well-defined by Lemma \ref{diffeo}. 

Let $W_1,W_2 \in \Cob_d^\theta(\emptyset, M)$ and $W_3,W_4 \in \Cob_d^\theta(M,\emptyset)$.
The following loops are clearly homotopic in $B\Cob_d^\theta$:
\begin{equation*}
\begin{split}
\gamma_{W_1\circ W_3} \simeq &\gamma_{W_1}\cdot \gamma_{W_3}\\
\simeq & \gamma_{W_1} \cdot \gamma_{W_4} \cdot \bar{\gamma}_{W_4} \cdot \bar{\gamma}_{W_2} \cdot \gamma_{W_2} \cdot \gamma_{W_3}\\
\simeq & \gamma_{W_1\circ W_4}\cdot \bar{\gamma }_{W_2\circ W_4} \cdot \gamma_{W_2\circ W_3}.
\end{split}
\end{equation*}

Since $\pi_1(B\Cob_d^\theta)$ is abelian, this implies:
\begin{prop}\label{chim}
For $W_1,W_2 \in \Cob_d^\theta(\emptyset, M)$ and $W_3,W_4 \in \Cob_d^\theta(M,\emptyset)$, the identity
\begin{equation}\label{chimera}
[{W_1}\circ W_3] + [W_2\circ W_4] = [W_1\circ W_4] + [W_2\circ W_3]
\end{equation}
holds in $\pi_1(B\Cob_d^\theta)$.
\end{prop}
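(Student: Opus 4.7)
The plan is to produce an explicit chain of homotopies inside $B\Cob_d^\theta$ that connects the loop $\gamma_{W_1\circ W_3}\cdot \gamma_{W_2\circ W_4}$ to the loop $\gamma_{W_1\circ W_4}\cdot \gamma_{W_2\circ W_3}$, and then to appeal to commutativity of $\pi_1(B\Cob_d^\theta)$ to assemble the desired equation. The only ingredients used are the 2-simplex identity \eqref{glue}, i.e.\ $\gamma_{V_1\circ V_2}\simeq \gamma_{V_1}\cdot \gamma_{V_2}$ for composable morphisms $V_1,V_2$, together with the standard null-homotopy $\beta\cdot\bar\beta\simeq \mathrm{const}$.

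First I would note that the four morphisms $W_1,W_2,W_3,W_4$ are all composable in the pairings that appear in the statement, since $W_1,W_2\in \Cob_d^\theta(\emptyset,M)$ and $W_3,W_4\in \Cob_d^\theta(M,\emptyset)$, so each $W_i\circ W_j$ referenced is a legitimate endomorphism of $\emptyset$. Starting with $\gamma_{W_1\circ W_3}$, I would apply \eqref{glue} to rewrite it as $\gamma_{W_1}\cdot \gamma_{W_3}$. Between these two paths I would splice in the null-homotopic loop
\begin{equation*}
\gamma_{W_4}\cdot \bar{\gamma}_{W_4}\cdot \bar{\gamma}_{W_2}\cdot \gamma_{W_2},
\end{equation*}
obtaining a loop homotopic to the original. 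Regrouping the result as $(\gamma_{W_1}\cdot \gamma_{W_4})\cdot \overline{(\gamma_{W_2}\cdot \gamma_{W_4})}\cdot (\gamma_{W_2}\cdot \gamma_{W_3})$ and applying \eqref{glue} three more times yields $\gamma_{W_1\circ W_4}\cdot \bar{\gamma}_{W_2\circ W_4}\cdot \gamma_{W_2\circ W_3}$.

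To finish, I would invoke the fact from \cite{GMTW} that $B\Cob_d^\theta$ has the homotopy type of an infinite loop space, so $\pi_1(B\Cob_d^\theta)$ is abelian. Under the homomorphism \eqref{F}, the homotopy $\gamma_{W_1\circ W_3}\simeq \gamma_{W_1\circ W_4}\cdot \bar{\gamma}_{W_2\circ W_4}\cdot \gamma_{W_2\circ W_3}$ translates into $[W_1\circ W_3]=[W_1\circ W_4]-[W_2\circ W_4]+[W_2\circ W_3]$, which rearranges to \eqref{chimera}. There is no genuine obstacle here: every step is a direct application of already-established tools, and the only thing to double-check is that each composition appearing in the chain is defined, which is immediate from the source/target conditions on the $W_i$.
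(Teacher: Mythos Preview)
Your argument is correct and is essentially identical to the paper's own proof: the paper also splits $\gamma_{W_1\circ W_3}$ via \eqref{glue}, inserts the same null-homotopic loop $\gamma_{W_4}\cdot \bar\gamma_{W_4}\cdot \bar\gamma_{W_2}\cdot \gamma_{W_2}$, regroups using \eqref{glue} again, and then invokes abelianness of $\pi_1(B\Cob_d^\theta)$ (from the loop-space structure of \cite{GMTW}) to rearrange into \eqref{chimera}.
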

We will refer to \eqref{chimera} as the chimera relations.\footnote{This very descriptive name is due to S\o ren Galatius.} 
Let $C$ be the subgroup of $F$ generated by the chimera relations.
Then \eqref{F} induces a homomorphism
\begin{equation}\label{F/C}
F/C \to \pi_1(B\Cob_d^\theta).
\end{equation}
We can now state the main theorem of this section.
\begin{thm}\label{pi1}
Assume that $\Cob_d^\theta$ satisfies $(ii)$ of Theorem \ref{morsti}. Then \eqref{F/C} is an isomorphism.
\end{thm}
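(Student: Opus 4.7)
The plan is to show that the homomorphism $\phi := \eqref{F/C}$ is an isomorphism, by establishing surjectivity directly from the reduction in the proof of Theorem~\ref{morsti}, and then constructing an inverse $\psi$ at the level of zigzag loops using Theorem~\ref{zzrel} as the framework. Note that $\phi$ is already known to be well-defined: Lemma~\ref{diffeo} handles the diffeomorphism relation, and Proposition~\ref{chim} gives the chimera relations.

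For surjectivity, I would start with any $\alpha \in \pi_1(B\Cob_d^\theta)$, represent it as a zigzag via Theorem~\ref{morph}, and apply the construction in the proof of Theorem~\ref{morsti}: using assumption~(ii), insert pairs $\gamma_{\overline{W}_i} \cdot \bar{\gamma}_{\overline{W}_i}$ and regroup to obtain a homotopy $\alpha \simeq \prod_j (\gamma_{W_j^{\mathrm{cl}}})^{\epsilon_j}$ with each $W_j^{\mathrm{cl}}$ a closed $\theta$-manifold and $\epsilon_j = \pm 1$. Assumption~(i) was only invoked at the final step of that proof to flip backward loops into forward ones, but this is unnecessary here: $\pi_1(B\Cob_d^\theta)$ is abelian (as a loop space by~\cite{GMTW}), so a backward loop $\bar{\gamma}_W$ of a closed manifold contributes $-[W]$ additively, giving $\alpha = \phi\bigl(\sum_j \epsilon_j [W_j^{\mathrm{cl}}]\bigr)$.

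For injectivity, I would define $\tilde{\psi}$ on zigzag loops by applying the same reduction: choose opposites $\overline{W}_i$ for each arrow and set $\tilde{\psi}(Z) = \sum_j \epsilon_j [W_j^{\mathrm{cl}}] \in F$. The first thing to verify is that the class in $F/C$ is independent of the choice of opposites: when $\overline{W}_i$ is replaced by another opposite $\overline{W}_i'$, exactly two of the closed manifolds $W_j^{\mathrm{cl}}$ change, and their difference can be written in the form $[P \circ R] - [P \circ R'] - [Q \circ R] + [Q \circ R']$ with $P, Q \in \Cob_d^\theta(\emptyset, N)$ and $R, R' \in \Cob_d^\theta(N, \emptyset)$ for a suitable object $N$, which vanishes in $F/C$ by a single chimera relation. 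With independence in hand, Theorem~\ref{zzrel} says it remains to check invariance under Type~\ref{rel1} and Type~\ref{rel2} moves. For Type~\ref{rel2}, the paths of morphisms preserve the $\theta$-diffeomorphism class of the underlying morphism by Lemma~\ref{diffeo}, hence the class in $F$. For Type~\ref{rel1}, I would compare the two reductions (corresponding to the arrows $W_1, W_2$ separately versus their composition $W_1 \circ W_2$) after using the just-proved independence to choose the opposite of $W_1 \circ W_2$ to be $\overline{W}_2 \circ \overline{W}_1$; the resulting difference then telescopes to a single chimera relation at the intermediate object between $W_1$ and $W_2$.

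By Theorem~\ref{zzrel}, $\tilde{\psi}$ then descends to $\psi : \pi_1(B\Cob_d^\theta) \to F/C$, and $\psi \circ \phi = \mathrm{id}_{F/C}$: for a closed $\theta$-manifold $W$, the zigzag $\gamma_W$ has length one and admits no alternation, so no opposites are inserted and the reduction gives $[W]$. Together with surjectivity, this shows $\phi$ is an isomorphism. The main obstacle I anticipate is the combinatorial bookkeeping in verifying both independence from the choice of opposites and invariance under Type~\ref{rel1} moves; in both cases, the chimera relations are precisely tailored to absorb the discrepancies, but making this matching fully rigorous in the general case will require careful inspection of the explicit formula in the proof of Theorem~\ref{morsti}.
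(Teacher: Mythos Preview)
Your overall strategy matches the paper's almost exactly: show $\phi$ is surjective via the reduction in Theorem~\ref{morsti}, define an inverse $\psi$ on zigzag loops using that same formula, and verify well-definedness via Theorem~\ref{zzrel} by checking independence of the chosen opposites (Lemma~\ref{opp}), invariance under Type~\ref{rel1} moves (Lemma~\ref{zzlim}), and invariance under Type~\ref{rel2} moves (Lemma~\ref{zzdiff}).

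Two points, however, are misjudged. First, your combinatorial claims are too optimistic. When an opposite $\overline{W}_k$ is replaced by $\overline{W}_k'$, it is not true that ``exactly two of the closed manifolds $W_j^{\mathrm{cl}}$ change'': in formula~\eqref{mprod} the morphism $\overline{W}_k$ appears in \emph{every} term $L_i$ with $i\le k$, so $k+1$ terms change. The paper handles this by pairing consecutive terms and applying a separate chimera relation to each pair, not a single one. The same goes for Type~\ref{rel1} moves: the discrepancy does not ``telescope to a single chimera relation'' but again requires roughly $k/2$ of them. You correctly anticipate that the bookkeeping is the crux, but your sketch of how it resolves is off.

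Second, your closing step differs slightly from the paper's and has a small gap as written. You check $\psi\circ\phi=\mathrm{id}$ only on generators $[W]$; this suffices only if $\psi$ is known to be a group homomorphism, which you have not established (additivity of~\eqref{mprod} under concatenation of zigzag loops is not obvious). The paper sidesteps this by instead observing that $\phi\circ\psi=\mathrm{id}_{\pi_1}$ is immediate from the construction (the formula~\eqref{mprod} arose from a homotopy), and then checking that $\psi$ is surjective using $[W_1]+[W_2]=[W_1\sqcup W_2]$ in $F/C$ to write any class as $[W]-[W']=\psi(\gamma_W\cdot\bar\gamma_{W'})$. Your route can also be repaired along these lines, but it needs that extra sentence.
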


Assuming (ii) in Theorem \ref{morsti}, \eqref{F/C} is surjective.  Indeed, the alternating zigzag 
\begin{equation*}
\cdot \xrightarrow{W_0} \cdot \xleftarrow{W_1} \dotsm \xrightarrow{W_n} \cdot
\end{equation*}
is homotopic to the image of
\begin{gather}
\begin{split}\label{mprod}
[W_0\circ \overline{W}_1\circ \dotsm \circ W_n] +& \sum_{\substack{i=1\\ i \text{ even}}}^n[W_n \circ \overline{W}_{n-1} \circ \dotsm \circ W_i \circ \overline{W}_i \circ \dotsm \circ \overline{W}_n] \\
-& \sum_{\substack{i=1\\ i \text{ odd}}}^n[W_n \circ \overline{W}_{n-1} \circ \dotsm \circ \overline{W}_i\circ {W}_i \circ \dotsm \circ \overline{W}_n]
\end{split}
\end{gather}
by the proof of Theorem \ref{morsti}. Similarly, if the zigzag starts with a morphism path in the opposite direction, just switch all signs in the sum. If $n$ is odd, the bars over the $W_n$'s should be switched. If the zigzag is not alternating, insert identity morphisms to make it alternating and apply the formula.

We want to see that the formula \eqref{mprod} defines an inverse of \eqref{F/C}.
We break the proof up in lemmas.

\begin{lem}\label{opp}
The formula \eqref{mprod} obtained from an alternating zigzag only depends on the choice of opposite morphisms $\overline{W}_i$ up to chimera relations.
\end{lem}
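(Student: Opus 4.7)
The strategy is to show invariance under a single swap: if only $\overline{W}_i$ is replaced by another opposite $\overline{W}_i'$, then \eqref{mprod} changes by an element of $C$. Iterating this for each $i$ in turn gives the lemma.

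First, I would catalogue the occurrences of $\overline{W}_i$ in \eqref{mprod}. Going through the formula, $\overline{W}_i$ appears in the main term $[W_0\circ \overline{W}_1\circ \dotsm \circ W_n]$ precisely when $i$ is odd (with sign $+1$), and for each $k\in\{1,\ldots ,i\}$ in the $k$-th detour term (with sign $\epsilon_k = (-1)^k$ as read off \eqref{mprod}). In each such term $\overline{W}_i$ occurs exactly once: if $k=i$ it is the middle factor, while if $k<i$ the detour traverses level $i$ on both its descending and ascending arcs, but by the alternating pattern of bars these contribute $W_i$ once and $\overline{W}_i$ once. Every term containing $\overline{W}_i$ can therefore be written as $A_j\circ \overline{W}_i\circ B_j$ with $A_j\in \Cob_d^\theta(\emptyset,M)$ and $B_j\in \Cob_d^\theta(N,\emptyset)$, where $M=\partial_0(\overline{W}_i)$ and $N=\partial_1(\overline{W}_i)$ are the same for all such terms.

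Next I would verify the sign balance $\sum_j\epsilon_j = 0$. When $i$ is even the main term does not contribute and the detour signs $(-1)^1,(-1)^2,\ldots,(-1)^i$ give $i/2$ plusses and $i/2$ minuses. When $i$ is odd the main term contributes $+1$ while the detours give $(i-1)/2$ plusses and $(i+1)/2$ minuses, so again the totals agree.

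The algebraic heart of the argument is a double chimera swap. Fix any two indices $j,j'$. Applying Proposition \ref{chim} at $M$ with $W_1=A_j$, $W_2=A_{j'}$, $W_3=\overline{W}_i\circ B_j$, $W_4=\overline{W}_i'\circ B_j$ gives
\begin{equation*}
[A_j\circ \overline{W}_i\circ B_j] - [A_j\circ \overline{W}_i'\circ B_j] \equiv [A_{j'}\circ \overline{W}_i\circ B_j] - [A_{j'}\circ \overline{W}_i'\circ B_j] \pmod{C}.
\end{equation*}
A second application at $N$, with $W_1=A_{j'}\circ \overline{W}_i$, $W_2=A_{j'}\circ \overline{W}_i'$, $W_3=B_j$, $W_4=B_{j'}$, replaces $B_j$ by $B_{j'}$, and the two swaps combine to give
\begin{equation*}
[A_j\circ \overline{W}_i\circ B_j] - [A_j\circ \overline{W}_i'\circ B_j] \equiv [A_{j'}\circ \overline{W}_i\circ B_{j'}] - [A_{j'}\circ \overline{W}_i'\circ B_{j'}] \pmod{C}.
\end{equation*}
Hence all these differences coincide mod $C$ with a common value $D$. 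The total change produced by swapping $\overline{W}_i$ for $\overline{W}_i'$ in \eqref{mprod} is then $\sum_j\epsilon_j D = \bigl(\sum_j\epsilon_j\bigr)D\equiv 0\pmod{C}$.

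The main obstacle is likely the bookkeeping in the cataloguing step: \eqref{mprod} is written compactly with ellipses, and some care is required to verify that each detour term contains $\overline{W}_i$ exactly once and to identify the factors $A_j$ and $B_j$ explicitly (including at the boundary cases $k=i$ and $i=n$, where $A_j$ or $B_j$ may be an identity morphism). Once this combinatorial setup is in place, the two chimera applications and the sign count are routine.
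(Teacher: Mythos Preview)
Your argument is correct and follows the same overall strategy as the paper—fix one index, locate all terms in \eqref{mprod} containing $\overline{W}_i$, and cancel the discrepancies via chimera relations using the sign balance—but the execution differs in a useful way. The paper observes that after cutting each relevant term immediately before $\overline{W}_k$, the right-hand pieces $L_i^{(2)}$ are \emph{all identical} (they coincide with the tail $\overline{W}_k\circ W_{k+1}\circ\dotsm\circ\overline{W}_n$ of the main term), so a single chimera relation suffices to match $[L_i]-[L_{i+1}]$ with $[N_i]-[N_{i+1}]$ for consecutive indices. Your version does not rely on this constancy: you allow both $A_j$ and $B_j$ to vary and compensate with a second chimera swap at the other boundary $N$. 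This makes your argument slightly more robust—it avoids having to verify the combinatorial fact that one side of the cut is independent of the term—at the modest cost of one extra chimera application. Both proofs rest on exactly the same sign count, and yours would go through unchanged for the other parities of $n$ and starting direction without the brief case analysis the paper sketches at the end.
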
 

\begin{proof}
Let an alternating zigzag of the form
\begin{equation*}
\cdot \xrightarrow{W_0}\cdot \xleftarrow{W_1} \dotsm  \xleftarrow{W_k} \dotsm \xleftarrow{W_n}\cdot
\end{equation*}
be given. We choose opposites $\overline{W}_i$ of $W_i$ for all $i$. Assume $\overline{W}_k'$ is a different choice of opposite to $W_k$. Define
\begin{equation*}
\begin{split}
L_0&=W_0 \circ \overline{W}_1 \circ \dotsm \circ \overline{W}_k \circ \dotsm \circ \overline{W}_n\\
N_0&=W_0 \circ \overline{W}_1 \circ \dotsm \circ \overline{W}_k' \circ \dotsm \circ \overline{W}_n,
\end{split}
\end{equation*} 
and for $1\leq i \leq n$ and $i$ odd,
\begin{align*}
L_i&=W_n \circ \overline{W}_{n-1} \circ \dotsm \circ W_k \circ \dotsm \circ W_i \circ \overline{W}_i \circ \dotsm \circ \overline{W}_k \circ \dotsm \circ \overline{W}_n\\
N_i&=W_n \circ \overline{W}_{n-1} \circ \dotsm \circ W_k \circ \dotsm \circ W_i \circ \overline{W}_i \circ \dotsm \circ \overline{W}_k' \circ \dotsm \circ \overline{W}_n.
\end{align*} 
For $i$ even, $L_i$ and $N_i$ are defined by the same  formulas except the bars over the middle $W_i$'s should be switched.

Using $\overline{W}_k$ as opposite,  \eqref{mprod} is given by
\begin{equation}\label{sti1}
\sum_{i=0}^n (-1)^i[L_i],
\end{equation}
while using $\overline{W}_k'$, it is
\begin{equation}\label{sti2}
\sum_{i=0}^n (-1)^i[N_i].
\end{equation}

Note that $N_i=L_i$ for $i>k$.
For $i\leq k$, we cut all $L_i$ between $W_{k-1}$ and $\overline{W}_k$ and all $N_i$ between $W_{k-1}$ and   $\overline{W}_k'$. Denote the parts by $L_i^{(j)}$ and $N_i^{(j)}$ for $j=1,2$ such that the $j=2$ parts contain $\overline{W}_k$ or $\overline{W}_k'$. Then for $i,l\leq k$,
\begin{align*}
L_i^{(2)}&=L_{l}^{(2)}\\
N_i^{(2)}&=N_{l}^{(2)}\\
L_i^{(1)}&=N_{i}^{(1)}.
\end{align*}
Thus there is a chimera relation
\begin{align*}
[L_i]-[L_{i+1}] =& [L_i^{(1)}\circ L_i^{(2)}] - [L_{i+1}^{(1)}\circ L_{i+1}^{(2)}] \\
\sim & [L_i^{(1)}\circ N_i^{(2)}] - [L_{i+1}^{(1)}\circ N_{i+1}^{(2)}]\\
 =& [N_i]-[N_{i+1}].
\end{align*}
Since $k$ is odd, the two sums \eqref{sti1} and \eqref{sti2} differ by $\frac{k+1}{2}$ applications of this relation.
This takes care of the case where $k$ is odd, $n$ is odd, and the first path is travelled in the positive direction. 

Changing the direction of all arrows in the zigzag only changes the signs in \eqref{sti1} and \eqref{sti2}. If $n$ is increased by one, an extra $L_{n+1} = N_{n+1}$ is added. This does not change the argument. Finally, if $k$ is even, then $L_0 = N_0$ and the remaining $L_i$ and $N_i$ are as before. There is now an even number of $1\leq i \leq k$, so the $L_i$ and $N_i$ still pair up.
\end{proof}

\begin{lem}\label{zzlim}
If two zigzags differ only by the relation \ref{rel1}, the corresponding sums \eqref{mprod} are related by chimera relations.
\end{lem}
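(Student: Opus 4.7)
My plan is to reduce the lemma to the canonical case of inserting or removing a single identity morphism, and then verify invariance of the formula \eqref{mprod} by a careful term-by-term comparison. Every Type~\ref{rel1} move between alternating zigzags can be decomposed as such an insertion or removal together with the corresponding combination or splitting of the two neighboring arrows into or from their composition. Hence I will assume without loss of generality that $Z_B$ is obtained from $Z_A$ by deleting an identity morphism $W_k = \mathrm{id}_M$ and merging its two same-direction neighbors $W_{k-1}, W_{k+1}$ into a single morphism $B_{k-1} := W_{k+1}\circ W_{k-1}$. I will take the opposite $\overline{B}_{k-1} := \overline{W}_{k-1}\circ\overline{W}_{k+1}$; by Lemma~\ref{opp}, any other choice only alters $\Phi(Z_B)$ by chimera relations, so this reduction is harmless.

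Next I will compare $\Phi(Z_A)$ and $\Phi(Z_B)$ summand by summand, exploiting the principle that gluing with an identity cobordism does not change the diffeomorphism class (nor the equivalence class of $\theta$-structures) of a closed manifold. From this principle: the main terms agree in $F$, since the $Z_A$-composition differs from the $Z_B$-composition only by an interpolated identity cylinder at position $k$. For the bounce-back summands I will set up the correspondence ``bounce at index $i$ in $Z_B$ matches the bounce at index $i$ in $Z_A$ for $i<k$, and the bounce at index $i$ in $Z_B$ matches the bounce at index $i+2$ in $Z_A$ for $i\geq k$''. Each matched pair of bounce-back closed manifolds then differs only by identity cylinders at the positions of $W_k$ and $\overline{W}_k$ in $Z_A$, so they represent the same class in $F$.

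The two bounce summands of $\Phi(Z_A)$ at indices $i=k$ and $i=k+1$ have no counterpart under this correspondence; I claim they represent the same closed manifold in $F$. Indeed, in both cases the ``bouncing'' occurs at or adjacent to the identity cylinder, and after deleting the trivial identity pieces both summands reduce to the same sequence of non-identity cobordisms $W_n,\overline{W}_{n-1},\ldots,\overline{W}_{k+1},W_{k+1},\ldots,\overline{W}_n$ glued along the same boundaries. Since \eqref{mprod} assigns these two terms opposite signs, they cancel. Combining all of the above, $\Phi(Z_A)=\Phi(Z_B)$ in $F$ for the chosen opposite, so in particular the two sums are related by chimera relations, as required. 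The main obstacle I anticipate is the combinatorial bookkeeping—tracking the free boundary components, the bar conventions in descent versus ascent, and the various cases arising from the parity of $k$ and from edge positions—but the identity-cylinder principle should make the comparison go through uniformly in each case.
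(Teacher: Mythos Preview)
Your handling of the specific ``remove an identity and merge its neighbours'' case is correct and matches the paper's easiest sub-case: with the choice $\overline{W}_k=\mathrm{id}$ and $\overline{B}_{k-1}=\overline{W}_{k+1}\circ\overline{W}_{k-1}$, the main terms agree, the bounce terms for $i\leq k-1$ and $i\geq k+2$ match up as you describe, and the two leftover bounce terms at $k$ and $k+1$ coincide and cancel. This is exactly the observation the paper records at the very end of its proof (``This only removes two identical terms with opposite signs from the sum'').

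The gap is in your reduction step. It is \emph{not} true that every Type~\ref{rel1} move between alternating zigzags decomposes into insertions/removals of identity arrows together with merging or splitting of same-direction neighbours. A basic counterexample: the Type~\ref{rel1} move that replaces $\cdot\xrightarrow{W}\cdot\xleftarrow{W}\cdot$ by the empty zigzag (backtracking along a non-identity $W$) cannot be obtained from your elementary move, since your move only ever creates or destroys \emph{identity} arrows and never cancels a pair $\bar\gamma_W\gamma_W$ with $W\neq\mathrm{id}$. More to the point, the paper's principal sub-case --- replacing $\cdot\xleftarrow{W_k}\cdot\xrightarrow{W_{k+1}}\cdot$ by $\cdot\xleftarrow{\mathrm{id}}\cdot\xrightarrow{U}\cdot$ when $W_{k+1}=W_k\circ U$ --- is of exactly this kind: if you try to realise it via your move by first splitting $W_{k+1}$ into $W_k\circ U$, you are left with $\xleftarrow{W_k}\xrightarrow{W_k}$ which you cannot remove.

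This matters because the identity-removal case is precisely the one where no chimera relations are needed: the two sums are literally equal in $F$. The substance of the lemma lies in the other cases, where the sums genuinely differ in $F$ and one must exhibit explicit chimera relations $[L_i]-[L_{i+1}]\sim[N_i]-[N_{i+1}]$ (obtained by cutting each $L_i$ and $N_i$ at the position between $W_{k-1}$ and the altered arrow) to pass from one sum to the other in $F/C$. Your argument, as it stands, proves only the trivial part and misses the step that actually uses the chimera relations.
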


\begin{proof}
Let a zigzag be given. After inserting identity morphisms if necessary, we assume that it is alternating of the form
\begin{equation}\label{zz}
\cdot \xrightarrow{W_0} \cdot \xleftarrow{W_1}   \dotsm  \xrightarrow{W_n} \cdot
\end{equation}
We choose opposites of all $W_i$ and define
\begin{align*}
L_0=&W_0 \circ \overline{W}_1 \circ \dotsm \circ \overline{W}_k \circ \dotsm \circ\overline{W}_n\\
L_i=&W_n \circ \overline{W}_{n-1} \circ \dotsm \circ W_k \circ \dotsm \circ W_i \circ \overline{W}_i \circ \dotsm \circ \overline{W}_k \circ \dotsm \circ \overline{W}_n.
\end{align*}
Then the zigzag \eqref{zz} corresponds to the class
\begin{equation}\label{sumL}
\sum_{i=0}^{n}{(-1)^i}[L_i].
\end{equation} 
We first consider a special case of how apply the relation \ref{rel1}. Let $k$ be odd and  $W_k\circ U = W_{k+1}$ for some $U$. After inserting identity morphisms, the original zigzag is equivalent to
\begin{equation*}
\cdot \xrightarrow{W_0} \cdot \xleftarrow{W_1}   \dotsm  \xrightarrow{W_{k-1}} \cdot \xleftarrow{1_{\partial_1(W_{k-1})}} \cdot \xrightarrow{U} \cdot \xleftarrow{W_{k+2}} \dotsm  \xrightarrow{W_n} \cdot
\end{equation*}
(If $W_{k-1}$ was an inserted identity morphism, we should really remove two identity morphisms, but this does not change \eqref{mprod}.) Let
\begin{align*}
N_0=&W_0 \circ \overline{W}_1 \circ \dotsm \circ W_{k-1}  \circ U \circ \overline{W}_{k+2} \circ \dotsm \circ \overline{W}_n\\
N_i=&W_n \circ \overline{W}_{n-1} \circ \dotsm \circ W_{k+2} \circ  \overline{U} \circ \overline{W}_{k-1} \circ W_{k-2} \circ \\
&\dotsm \circ W_i \circ \overline{W}_i \circ \dotsm \circ W_{k-1}\circ  U \circ \overline{W}_{k+2} \circ \dotsm \circ \overline{W}_n
\end{align*}
for $0 < i\leq k-1$ and 
\begin{equation*}
N_k =N_{k+1} = W_n \circ \overline{W}_{n-1} \circ \dotsm \circ W_{k+2} \circ \overline{U} \circ U \circ \overline{W}_{k+2} \circ \dotsm \circ \overline{W}_n.
\end{equation*}
For $k+2\leq i \leq n$, let $N_{i} = L_{i}$. Then the new zigzag corresponds to
\begin{equation}\label{sumN}
\sum_{i=0}^{n}{(-1)^i}[N_i].
\end{equation}
We may choose $\overline{U}= \overline{W}_{k+1}\circ W_k $. For $0\leq i\leq k-1$, we cut all $L_i$ between  $W_{k-1}$ and $\overline{W}_k$ and all $N_i$ between $W_{k-1} $ and $U$. Then there are chimera relations
\begin{equation*}
[L_i]+[N_{i+1}] \sim [L_{i+1}]+[N_i]  
\end{equation*}
for all $0\leq i\leq k-2$. There is an even number of $ i\leq k-2$. Moreover,
\begin{equation*}
 [N_{k-1}]+[L_{k}] \sim [L_{k-1}]+[L_{k+1}]
\end{equation*}
by another chimera relation. Finally, $N_{k}=N_{k+1}$ and $N_i = L_{i}$ for $i\geq k+2$. 
Hence the two sums \eqref{sumL} and \eqref{sumN} are equivalent under the chimera relations. 

If we consider the case $W_k =W_{k+1} \circ U$ instead and choose $\overline{U}= \overline{W}_{k}\circ W_{k+1} $, $[L_0]=[N_0]$ and there is a chimera relation
\begin{equation*}
[L_k]+[N_{k+1}]\sim [L_{k+1}]+[N_k].
\end{equation*}
From these cases, the statement is easily deduced for $n$ odd, $k$ even, and the case where all arrows are switched. 

We could also apply \ref{rel1} to replace 
\begin{equation*}
\cdot\xrightarrow{W_k} \cdot \xleftarrow{1_{\partial_1(W_k)}} \cdot \xrightarrow{W_{k+1}} \cdot
\end{equation*}
by 
\begin{equation*}
\cdot\xrightarrow{W_k\circ W_{k+1}} \cdot
\end{equation*}
This only removes two identical terms with opposite signs from the sum \eqref{mprod}.

All other applications of \ref{rel1} may be given as a sequence of the moves considered above.
\end{proof}

\begin{lem}\label{zzdiff}
A Type \ref{rel2} move does not change the sum \eqref{mprod}.   
\end{lem}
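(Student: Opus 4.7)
The plan is to reduce the statement to the fact that the sum \eqref{mprod} depends only on the diffeomorphism classes (rel boundary, with $\theta$-structure up to equivalence) of the morphisms appearing in the zigzag. By Lemma \ref{diffeo}, a Type \ref{rel2} move replaces some $W_k$ in the zigzag by a morphism $W_k'$ such that there is a diffeomorphism $\phi : W_k \to W_k'$ fixing $\partial W_k$ pointwise and preserving the equivalence class of $\theta$-structures. I would start from the alternating form of the zigzag, insert identities if needed, and then do the comparison factor by factor in each $L_i$.

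The key observation I would use is that, since $W_k$ and $W_k'$ have the same boundary with the same $\theta$-structure, any opposite $\overline{W}_k$ of $W_k$ (in the sense of Theorem \ref{morsti}(ii)) is automatically an opposite of $W_k'$. So I may use the same choices of opposites for both zigzags. With this choice, the closed manifolds $L_i'$ attached to the new zigzag are obtained from the $L_i$ attached to the old zigzag by substituting $W_k'$ for $W_k$ in every position where $W_k$ appears, keeping all other factors (and all $\overline{W}_j$) intact.

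Extending $\phi$ by the identity on the remaining factors produces a diffeomorphism $L_i \to L_i'$, and since $\phi$ preserves the equivalence class of $\theta$-structures and the identity obviously does, so does the extension. Hence $[L_i] = [L_i']$ in $F$, and the corresponding sums \eqref{mprod} coincide already in $F$, not merely in $F/C$.

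The only subtle point I anticipate is checking that the argument covers all cases of the Type \ref{rel2} move uniformly: the formula \eqref{mprod} has several case distinctions (first arrow direction, parity of $n$, whether $k$ is even or odd, whether $W_k$ appears singly or with $\overline{W}_k$ in a given $L_i$), but the substitution argument is the same in each case since we are only editing one factor in each composed manifold. By Lemma \ref{opp}, the fact that we have chosen a particular opposite for $W_k$ (used for both zigzags) is harmless: any other choice of opposite would alter the sum only by chimera relations, so equality in $F/C$ is preserved regardless.
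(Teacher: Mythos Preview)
Your proposal is correct and follows essentially the same approach as the paper: choose $\overline{W}_k' = \overline{W}_k$, so that the only change in the $L_i$'s is the substitution of $W_k'$ for $W_k$, and then invoke Lemma~\ref{diffeo} to conclude that each $[L_i]$ is unchanged in $F$. The paper's proof is simply a two-line version of your argument, omitting the explicit ``extend $\phi$ by the identity'' step and the case discussion you anticipated.
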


\begin{proof}
Let a zigzag
\begin{equation*}
\cdot \xrightarrow{W_0} \dotsm  \xrightarrow{W_{k}}  \dotsm  \xrightarrow{W_n} \cdot
\end{equation*}
be given.
 
If $W_k$ is replaced by some $W_k'$ by a Type \ref{rel2} move, we may choose $\overline{W}_k'$ equal to $\overline{W}_k$. 
Hence by Lemma \ref{diffeo}, replacing $W_k$ by $W_k'$ does not change the diffeomorphism classes in \eqref{mprod}.
\end{proof}

\begin{proof}[Proof of Theorem \ref{pi1}.]
We need to see that the surjection
\begin{equation*}
F/C \to \pi_1(B\Cob_d^\theta)
\end{equation*}
is injective. 
Consider the composition
\begin{equation*}
\pi_1(B\Cob_d^\theta) \xrightarrow{\pi} F/C \to \pi_1(B\Cob_d^\theta)
\end{equation*}
where $\pi$ is defined by the formula \eqref{mprod}. This is well-defined by Theorem \ref{zzrel} and Lemma \ref{opp}, \ref{zzlim}, and \ref{zzdiff}.
The composition is the identity so it is enough to see that ${\pi}$ is surjective.

Let $x \in F/C$. The chimera relations imply that $[W_1]+[W_2]=[W_1 \sqcup W_2]$. Thus we can represent $x$ by an element $[W]-[W']\in F$. This is $\pi(\gamma_W\cdot \bar{\gamma}_{W'})$, since we may choose $\overline{W}'=\emptyset$. 
\end{proof}


\begin{thebibliography}{99}
\bibitem{adams1} J. F. Adams. Vector fields on spheres. \emph{Ann. of Math.} 75 (1962), 603--632.
\bibitem{ABJ} M. B\"{o}kstedt, J. L. Dupont, A. M. Svane. Cobordism obstructions to independent vector fields. In preparation.  
\bibitem{brown} R. Brown, A. R. Salleh. A van Kampen theorem for unions on nonconnected spaces. \emph{Arch. Math.} 42 (1984), no. 1, 85--88.
\bibitem{GMTW} S. Galatius, U. Tillmann, I. Madsen, M. Weiss. The homotopy type of the cobordism category. \emph{Acta Math.} 202 (2009), no. 2, 195--239. 
\bibitem{monoids} S. Galatius, O. Randal-Williams. Monoids of moduli spaces of manifolds. \emph{Geom. Topol.} 14 (2010), no. 3, 1243--1302.
\bibitem{higgins} P. J. Higgins.  \emph{Notes on categories and groupoids.} Van Nostrand Rienhold Mathematical Studies, No. 32. Van Nostrand Reinhold Co., London, 1971.
\bibitem{james1} I. M. James. The intrinsic join: a study of the homotopy groups of Stiefel manifolds. \emph{Proc. London Math. Soc. (3)} 8 (1958), 507--535.
\bibitem{may} J. P. May. \emph{Simplicial objects in algebraic topology.} Van Nostrand Mathematical Studies, No. 11. D. Van Nostrand Co., Inc., Princeton, N.J., 1967. 
\bibitem{reinhart} B. L. Reinhart. Cobordism and the Euler number. \emph{Topology} 2 (1963), 173--177.
\bibitem{steenrod} N. Steenrod. \emph{The topology of fiber bundles.} Princeton Mathematical Series, vol. 14. Princeton University Press, Princeton, 1951.
\bibitem{stong} R. E. Stong. \emph{Notes on cobordism theory.} Mathematical notes. Princeton University Press, Princeton, 1968.
\end{thebibliography}
\end{document}